\documentclass{article}
\usepackage{amsmath, amssymb, amsthm}
\usepackage{thesisCommands}
\usepackage{float}
\usepackage{tikz}
\usetikzlibrary{decorations.pathreplacing}
\usepackage{bookmark}

\usepackage[
backend=biber,
style=alphabetic,
sorting=ynt
]{biblatex}
\title{The No-Core Principle for Stationary Actions and Ends of Stationary Random Subgroups}
\author{Yair Hartman, Nadav Kalma}
\date{}

\newtheorem{theorem}{Theorem}[section]
\newtheorem{lemma}[theorem]{Lemma}
\newtheorem{col}[theorem]{Corollary}
\theoremstyle{definition}
\newtheorem{definition}[theorem]{Definition}
\newtheorem{remark}[theorem]{Remark}
\newtheorem{question}[theorem]{Question}

\begin{document}

\maketitle

\begin{abstract}
We prove a no-core principle for stationary actions of countable groups.
Namely, if a Borel set intersects almost every orbit in finitely many points
and has positive measure, then it is supported, modulo null sets, on the
finite-orbit part of the action. This extends to stationary actions a basic
regularity phenomenon known for measure-preserving actions.

We apply this principle to the geometry of Stationary Random Subgroups.
For a finitely generated group, we prove that the Schreier graph of a
stationary random subgroup has almost surely \(0,1,2\), or infinitely many
ends. Finally, we contrast this probabilistic regularity with the topological
notion of Boomerang subgroups: for every \(k\geq 3\), including
\(k=\aleph_0\), we construct a Boomerang subgroup of \(\mathbb{F}_3\) whose Schreier
graph has exactly \(k\) ends.
\end{abstract}

\section{Introduction}

In the theory of group actions on measure spaces, one considers probability measure-preserving (p.m.p.)~actions and non-singular actions, namely those that preserve the measure class. Non-singular actions provide a broad framework requiring minimal rigid structure. While every p.m.p.\ action is inherently non-singular, the former exhibits a wide variety of strong regularity properties.

Of particular interest to our work is the fact that, on the infinite-orbit part, p.m.p.\ actions do not admit Borel selectors of positive measure. 

Stationary actions occupy an intermediate realm; these are actions equipped with a measure that is invariant under convolution with respect to a probability measure $\mu\in \ProbG$ (a ``random walk'') on the group. Every p.m.p.\ action is $\mu$-stationary, for every $\mu$, and every $\mu$-stationary action is non-singular, assuming that $\mu$ is \emph{generating}, namely, the support of $\mu$ generates $\G$ as a semigroup. A key advantage of stationary measures over p.m.p.\ actions is their guaranteed existence on compact spaces, even for actions of non-amenable groups. At the same time, stationary measures are more ``regulated'' than general non-singular actions.

A natural question arises: which of the properties and regularities of p.m.p.\ actions extend to the broader context of stationary actions?

Our first result shows that if a stationary action admits a positive-measure ``core'', then that core is necessarily supported on finite orbits.

\begin{theorem}[No-core principle for stationary actions]
    \label{no-core}
    Let $\mu \in \ProbG$ be a generating measure, and let $(\Gamma, \mu) \curvearrowright (X,\nu)$ be a stationary action. Let $C \subseteq X$ be a measurable set with $\nu(C)>0, $ such that for $\nu$-a.e. $x\in X$, $|\G x\cap C|< \infty$. Then for $\nu$-a.e.\ $x \in C$, $|\Gamma x| < \infty$. 
    
    Equivalently, almost every point in $C$ has a finite orbit.
\end{theorem}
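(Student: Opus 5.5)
The plan is to turn stationarity into a statement about the Markov chain that $\mu$ induces on each orbit, and then use the fact that this chain is doubly stochastic. By stationarity, $\nu=\mu*\nu$, and iterating gives $\nu=\mu^{*n}*\nu$ for every $n$. Let $(w_n)$ be the $\mu$-random walk, with $w_n=g_n\cdots g_1$ for i.i.d.\ $g_i\sim\mu$. Then for every $n$
\[
\nu(C)=\sum_{g}\mu^{*n}(g)\,\nu(g^{-1}C)=\int_X \mathbb{P}\bigl(w_n x\in C\bigr)\,d\nu(x).
\]
Averaging over $n=1,\dots,N$ gives
\[
\nu(C)=\int_X \frac1N\sum_{n=1}^N \mathbb{P}(w_nx\in C)\,d\nu(x).
\]
The integrand is measurable in $x$, since it is a countable sum of measurable functions, and it is bounded by $1$.

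First I reduce to the infinite-orbit part. The set $X_\infty=\{x: |\Gamma x|=\infty\}$ is $\Gamma$-invariant and Borel. It suffices to show $\nu(C\cap X_\infty)=0$. If $\nu(X_\infty)>0$, the normalized restriction $\nu_\infty$ of $\nu$ to $X_\infty$ is again $\mu$-stationary, because restriction to an invariant set commutes with convolution. Also $C'=C\cap X_\infty$ still meets a.e.\ orbit in finitely many points. So I may assume every orbit is infinite, apply the identity above to $C'$, and aim to show that the integrand tends to $0$ for $\nu$-a.e.\ $x$. Dominated convergence then forces $\nu(C')=0$.

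Now fix $x$ with $\Gamma x$ infinite and $F=\Gamma x\cap C'$ finite. The process $w_nx$ is a Markov chain on the countable set $\Gamma x$ with transition kernel $p(y,z)=\sum_{g:\,gy=z}\mu(g)$. Because $\mu$ is generating, this chain is irreducible. The key observation is that each $g$ acts on $\Gamma x$ as a bijection, so
\[
\sum_y p(y,z)=\sum_g \mu(g)=1 .
\]
Thus the chain is doubly stochastic, and counting measure on $\Gamma x$ is an invariant measure for it. I then show that the chain is not positive recurrent, splitting into two cases.
\begin{itemize}
\item If the chain is recurrent, its invariant measure is unique up to scaling. Hence it is a multiple of counting measure, which has infinite total mass, so the chain is null recurrent.
\item If the chain is transient, no invariant probability exists at all.
\end{itemize}
In either case, the ergodic theorem for Markov chains (or the standard Cesàro limit $\frac1N\sum_{n\le N} p^{(n)}(x,y)\to 0$ for non-positive-recurrent irreducible chains) gives $\frac1N\sum_{n\le N}\mathbb{P}(w_nx=y)\to0$ for each $y\in F$. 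Since $F$ is finite, the integrand tends to $0$, which completes the argument.

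The main obstacle is the step excluding positive recurrence. Without it, a non-symmetric $\mu$ might appear to allow the walk to concentrate on a finite set of an infinite orbit. The doubly stochastic structure, which comes from $\Gamma$ acting by permutations, is exactly what rules this out, so I would check carefully the uniqueness of invariant measures for irreducible recurrent chains in the possibly non-reversible setting. The remaining points are routine:
\begin{itemize}
\item the measurability of $x\mapsto \mathbb{P}(w_nx\in C)$;
\item the fact that finiteness of $|\Gamma x\cap C|$ holds almost everywhere, which suffices for the pointwise limit to hold a.e.
\end{itemize}
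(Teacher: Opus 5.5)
Your proof is correct and has the same skeleton as the paper's. Stationarity gives $\nu(D)=\int_X P^n_{\Gamma x}(x,\Gamma x\cap D)\,d\nu(x)$, the integrand tends to $0$ on infinite orbits that meet the core in a finite set, and dominated convergence finishes.

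The real difference is how you prove the key lemma that an infinite orbit cannot carry a positive recurrent chain.
\begin{itemize}
\item The paper identifies the return time of the orbit chain to $x$ with the hitting time of $K=\mathrm{Stab}_\Gamma(x)$ by the walk. It then passes to $\check\mu$ and invokes the cited Kac formula $\mathbb{E}_{\check\mu}[\tau_K^+]=[\Gamma:K]$.
\item You observe directly that each $g$ permutes $\Gamma x$. So the orbit chain is doubly stochastic and counting measure is invariant. Uniqueness of invariant measures for irreducible recurrent chains then forces null recurrence.
\end{itemize}
Your argument is self-contained and is really the mechanism behind the Kac formula: for a recurrent orbit chain it gives $\mathbb{E}_x[\tau_x^+]=|\Gamma x|$. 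It also makes transparent why no symmetry of $\mu$ is needed. The paper's citation buys brevity and the exact index identity.

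There is a smaller difference in the limit you take. By averaging over $n$ you only need the Ces\`aro statement $\frac1N\sum_{n\le N}P^n(x,y)\to 0$. This holds for every irreducible chain that is not positive recurrent, with no periodicity considerations. The paper instead uses pointwise convergence $P^n(x,y)\to 0$, citing a theorem for null recurrent chains.

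Your preliminary restriction of $\nu$ to the invariant set $X_\infty$ is valid but unnecessary. The paper works with $D=C\cap X_\infty$ directly and notes that $f_n(x)=0$ whenever $\Gamma x$ misses $D$.
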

\noindent We call a set $C$ as in the theorem above a \emph{core}.

\vspace{1mm}

Our result holds in particular for p.m.p.\ actions, as they are $\mu$-stationary for every generating $\mu$. This result for p.m.p.\ actions is known using tools of equivalence relations and Descriptive Set Theory (see, for example~\cite{gaboriau2025around}). It is interesting that our proof uses random walks to prove this result, even in the p.m.p.\ setup. One should note that the no-core principle does not hold for general non-singular actions.

The No-Core Principle was first introduced in \cite{Ab_rt_2022} to prove a result regarding the space of ends for unimodular random manifolds. We apply this principle to address a similar question in the theory of Stationary Random Subgroups (SRSs).

Let $\G$ be a discrete countable group, 
let $\subg$ denote the compact space of all subgroups of $\G$, equipped with the Chabauty topology, and with the conjugation action of $\G$. A random subgroup is a 
probability measure on $\subg$.
An Invariant Random Subgroup (IRS), introduced in \cite{kes14}, is a conjugation-invariant probability measure on \(\subg\). Equivalently, the conjugation action \(\Gamma\curvearrowright(\subg,\nu)\) is p.m.p. 
Similarly, a $\mu$-Stationary Random Subgroup (SRS) is a $\mu$-stationary measure on $\subg$.

By viewing a subgroup geometrically via its Schreier graph, one can ask which geometric properties of normal subgroups generalize to IRSs, and subsequently, which properties of IRSs extend to SRSs. In this article, we address this question concerning the ends of a graph.

For a locally finite graph, the space of ends is a topological invariant that formalizes the notion of ``directions to infinity''. The classical theorem of Freudenthal \cite{Freudenthal1944/45} and Hopf \cite{Hopf1943/44} establishes that the Cayley graph of a finitely generated group has exactly $0, 1, 2$, or infinitely (uncountable) many ends. Furthermore, this number is independent of the choice of generating set, making it a group invariant. Building on this, Stallings~\cite{Stallings1971GroupTA} proved that the number of ends reveals deep algebraic information about the group; for example, a group is 2-ended if and only if it is virtually $\mathbb{Z}$.

Motivated by these classical results, we say a collection of graphs is \emph{end-rigid} if each of its graphs has exactly $0, 1, 2$, or infinitely many ends. Thus the collection of Cayley graphs of finitely generated groups is end-rigid. Equivalently, the Schreier graphs associated with normal subgroups (which are Cayley graphs of quotients), are end-rigid.
Abels~\cite{Abels1973} proved that the collection of quasi-transitive graphs (i.e., all graphs with a finite number of orbits under their automorphism group) is end-rigid. In terms of $\subg$, all Schreier graphs of subgroups with finite conjugacy classes are end-rigid.

Consequently, given a probability measure on $\subg$, one may ask whether the Schreier graph of almost every subgroup $\Delta < \G$ is end-rigid. For IRSs, a theorem of Lyons and Schramm \cite{LS99} regarding unimodular graphs provides an affirmative answer. Moreover, Curien~\cite[Corollary 24]{curien18} proved that the stationary random graphs, with respect to nearest neighbor transitions, are end-rigid as well.

In this article, we prove that this rigidity phenomenon also holds for general SRSs.

\begin{theorem}
\label{ends_theorem}
Let $\G$ be a group generated by a finite symmetric set $S$, and let $\mu \in \ProbG$ be a generating measure. Let \(\nu\in\Prob(\subg)\) be a \(\mu\)-stationary random subgroup.
Then
\[
|\Ends(\Sch(H\backslash\Gamma,S))|\in\{0,1,2,\infty\}
\]
for \(\nu\)-almost every \(H\).
\end{theorem}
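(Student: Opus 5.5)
We plan to deduce Theorem~\ref{ends_theorem} from the no-core principle (Theorem~\ref{no-core}), in the spirit of the unimodular argument of \cite{Ab_rt_2022}. Suppose, for contradiction, that the set $A_k$ of subgroups $H$ whose Schreier graph $\Sch(H\backslash\Gamma,S)$ has exactly $k$ ends, with $3\le k<\infty$, has positive $\nu$-measure. The number of ends is a property of the unrooted graph. Changing the root of $\Sch(H\backslash\Gamma,S)$ from $H$ to $Hg$ corresponds to passing to the conjugate $g^{-1}Hg$, and the graphs are isomorphic. Hence $A_k$ is $\Gamma$-invariant and Borel, and we may restrict $\nu$ to $A_k$ and normalize. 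The restriction of a stationary measure to an invariant set is still $\mu$-stationary, so we may assume $\nu(A_k)=1$.

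Next we build a core. For a subgroup $H$ with $k\ge 3$ ends we define a canonical finite set of vertices of the Schreier graph. For each radius $r$, consider the finite sets $F$ of vertices such that $\Sch\setminus F$ has at least $3$ infinite components, each carrying a distinct end. Choose $r$ minimal such that some ball $B(v,r)$ separates the graph into $k$ infinite components, one per end. Let $Z(H)$ be the set of vertices $v$ for which $B(v,r)$ achieves this.

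The key combinatorial claim is that $Z(H)$ is nonempty and finite. Nonempty is clear, since finitely many ends can be separated by a finite set, and that set lies in some ball. Finiteness is where $k\ge3$ is used, as in the Hopf--Freudenthal argument. If infinitely many such $v$ existed, they would accumulate toward some end $\xi$. A ball far out toward $\xi$ leaves all the other $k-1\ge2$ ends in one component of its complement. Such a ball cannot split the remaining ends into separate components, so it cannot separate all $k$ ends. For $k=2$ this fails, as in a bi-infinite line, which is why $2$ is allowed. For $k=1$ the definition needs no separation at all.

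We then set
\[
C=\{H\in\subg:\ H\in A_k,\ \text{the root vertex } H \text{ lies in } Z(H)\}.
\]
The set $Z$ is defined equivariantly: the coset $Hg$ lies in $Z(H)$ iff the root of the conjugate $g^{-1}Hg$ lies in $Z(g^{-1}Hg)$. Thus $|\Gamma H\cap C|$ is at most the number of cosets in $Z(H)$, and finitely many group elements map onto each one. The equivalence between orbit points and cosets is through the normalizer, so $|\Gamma H\cap C|\le|Z(H)|<\infty$. Measurability of $C$ follows since the ends structure and separation by balls are determined by countably many finite-radius conditions in the Chabauty topology.

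It remains to show $\nu(C)>0$. Each coset in $Z(H)$ corresponds to some $g$ with $g^{-1}Hg\in C$, so $\Gamma C\supseteq A_k$. Since $\Gamma$ is countable, some translate $gC$, and hence $C$ itself, has positive measure; here we use that stationary measures with generating $\mu$ are quasi-invariant. Theorem~\ref{no-core} now gives that a.e.\ $H\in C$ has a finite conjugacy class, i.e.\ finite orbit. The Schreier graph of such an $H$ is quasi-transitive: the normalizer $N(H)$ has finite index and acts on $H\backslash\Gamma$ by graph automorphisms, by left multiplication $Hg\mapsto Hng$, with finitely many orbits. By Abels' theorem~\cite{Abels1973} it therefore has $0,1,2$ or infinitely many ends, contradicting $k\ge3$ finite.

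The main obstacle is the finiteness of $Z(H)$, that is, making the "canonical separating region is finite" argument rigorous for general locally finite graphs with finitely many ends. We will try to prove it directly by the accumulation argument above. Should that fail, we will instead take $Z(H)$ to be the set of vertices lying in some minimal-size cut separating all ends into distinct components. The finiteness of such cuts for $k\ge3$ follows from a standard crossing-cuts argument.
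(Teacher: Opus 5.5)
Your proof is correct and follows essentially the paper's route: a core made of subgroups whose root ball splits $\Sch(H)$ into $k$ infinite components, the no-core principle, and Abels' theorem applied to the quasi-transitive graph of a subgroup with finite conjugacy class. The differences are cosmetic. The paper fixes the radius $n$ and uses $E_k=\bigcup_n C_{k,n}$ instead of your canonical minimal radius plus quasi-invariance, and it settles your ``main obstacle'' directly (your far-ball argument also works): two such balls of equal radius $n$ at distance $>2n$ are disjoint and would leave at least $2k-2>k$ infinite components, so all admissible roots lie within distance $2n$ of one another.
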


Here, $\Ends(\Sch(H\backslash\Gamma,S))$ denotes the ends space of the Schreier graph of subgroup $H$ with respect to the generating set $S$.

For IRSs, unimodularity gives the stronger conclusion that the infinite case is uncountable. Our argument for SRSs rules out finite numbers of ends larger than two, but does not rule out countably infinitely many ends.

Simultaneously with and independently of our work, Levit and Silman~\cite{Levit26} proved a related result in a slightly different setting, using different tools. Their argument applies to the general framework of random metric-measure spaces and they also show that when the end space is infinite, it is topologically, then it is a Cantor set. Moreover, they classify
the homeomorphism types of surfaces, generalizing the work of Biringer-Raimbault~\cite{Biringer_2016} on unimodular Riemannian manifolds
to the stationay setup.

In the finitely generated group setting, however, the approach taken in~\cite{Levit26} requires the random walk to be finitely supported and symmetric, assumptions that are not needed in our argument.
This leaves open the following natural question: can a \(\mu\)-SRS for an infinitely supported or non-symmetric measure have countably infinitely many ends almost surely? \\

While SRSs provide a probabilistic generalization of IRSs, one can also consider the topological related notion of  {Boomerang subgroups}. Introduced in \cite{Boomerang}, a subgroup $\Delta \le \G$ is called a Boomerang subgroup if for every $g \in \G$, the sequence of conjugates $(g^n \Delta g^{-n})_{n=0}^\infty$ admits a subsequence converging to $\Delta$ in the Chabauty topology. The relation to IRSs is the following:  if $\lambda$ is any IRS on $\G$, then $\lambda$-almost every subgroup is a Boomerang subgroup; that is, $\lambda(\mathrm{Boom}(\G)) = 1$, where $\mathrm{Boom}(\G)$ denotes the space of all Boomerang subgroups of $\G$.

In contrast to the almost sure end-rigidity established for IRSs and SRSs, we demonstrate that end-rigidity does not hold for general Boomerang subgroups.

\begin{theorem}[Failure of end-rigidity for Boomerang subgroups]
\label{thm:boomerang_ends}
    For any integer $k \ge 3$ or $k = \aleph_0$, there exists a Boomerang subgroup of the free group $\mathbb{F}_3$ whose Schreier graph has exactly $k$ ends.    
\end{theorem}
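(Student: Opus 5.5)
We will construct the subgroup directly as a Schreier graph. Subgroups $\Delta \le \mathbb{F}_3 = \langle a,b,c\rangle$ correspond to connected, pointed, $3$-regular labelled graphs: each vertex has exactly one outgoing and one incoming edge of each label, and loops and multi-edges are allowed. Chabauty convergence $g^{n_j}\Delta g^{-n_j}\to\Delta$ amounts to the following. The pointed Schreier graph rooted at the vertex $\Delta g^{-n_j}$ agrees with the graph rooted at $\Delta$ on balls of radius $R_j\to\infty$; equivalently, for every $R$ there are infinitely many $n$ such that the $R$-ball around $\Delta g^{-n}$ is isomorphic, as a rooted labelled graph, to the $R$-ball around the root. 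So it suffices to build a labelled graph $\mathcal{G}$ with exactly $k$ ends and to check this recurrence property for the root along the path $v, vg^{-1}, vg^{-2},\dots$ for every reduced word $g$.

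\textbf{The construction.} Take a ``spine'' made of $k$ rays glued at a central finite piece (for $k=\aleph_0$, use a one-ended tree-like arrangement with countably many rays attached along a main ray, or a comb). Use the letter $a$ to form these rays as bi-infinite or one-sided $a$-lines. Close up all other labels locally so they create no new ends: $b$ and $c$ act by finite cycles (or fixed loops) except where needed. The key design principle is that $\mathcal{G}$ should be \emph{rich in repeated patterns}. Every finite rooted ball occurring around the root must reappear infinitely often along every orbit $v g^{-n}$. To achieve this, decorate the rays with finite ``gadgets'' made of $b,c$-cycles, arranged so that along each ray every finite pattern (including a copy of the neighbourhood of the root) recurs infinitely often, with Sturmian/Toeplitz-like recurrence. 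Each gadget is a finite subgraph, so the end count is still determined by the rays, namely $k$.

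\textbf{Verifying the Boomerang property.} Fix a reduced word $g$. Consider the walk $n\mapsto \Delta g^{-n}$, which follows the path labelled by $g^{-1}$ repeatedly. There are two cases. \emph{(i)} The orbit is periodic, i.e.\ $\Delta g^{-n}=\Delta$ for some $n>0$; then we are done trivially. \emph{(ii)} The walk is infinite. Since the graph is locally finite with finitely many ends, the walk escapes to infinity along one ray, or else it would revisit a finite region infinitely often and, by the pigeonhole principle on the finite set of vertices there, it would in fact be eventually periodic. We would strengthen this to: a non-periodic walk eventually stays deep inside a single ray. Along that ray we need the $R$-ball around $\Delta g^{-n}$ to match the $R$-ball around $\Delta$ for infinitely many $n$. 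This is where the gadget design enters. We make the decoration along each ray a \emph{uniformly recurrent} sequence that contains, for every $R$, the $R$-ball of the root together with all the boundary data needed. The walk by $g^{-1}$ on a ray moves with a fixed ``period'': it advances by some net displacement $d$ per application of $g^{-1}$, and inside the $b,c$-decorated region it behaves like a finite-state transducer. We therefore choose the decoration to be recurrent along every arithmetic progression. A Toeplitz-type word, in which every pattern occurs along every residue class modulo every $d$, achieves this.

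\textbf{The main obstacle.} The hardest step is making case (ii) rigorous. The displacement of $g^{-1}$ along a ray depends on the local decoration, since $b,c$ edges can shift position within gadgets, so the walk is not literally an arithmetic progression. To handle this, I would place all $b,c$ structure on vertices so that $b$ and $c$ act as involutions or short cycles that preserve the ``$a$-coordinate'' along rays up to bounded error. Then $g^{-1}$ acts as translation by $\epsilon_a(g)$, the exponent sum of $a$, plus a bounded local correction determined by a finite window. If $\epsilon_a(g)=0$, the walk stays bounded, and case (i) applies by pigeonhole: the bounded walk visits finitely many vertices, so by determinism it is eventually periodic and hence periodic because each letter acts as a bijection. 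If $\epsilon_a(g)\ne0$, the walk marches down a ray, and recurrence of the Toeplitz decoration along progressions gives the required returns. Finally, the root itself can be placed at the central junction only if that junction's ball also recurs. For $k\ge3$ this is impossible along a ray, because the junction has a different end structure. I would therefore root the graph at a generic vertex on one ray whose $R$-balls recur, and note that the number of ends does not depend on the root. This shows that $\Delta$ is Boomerang with $|\Ends|=k$.
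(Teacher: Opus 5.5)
Your proposal takes a different route from the paper, but it has a genuine gap exactly where the content of the theorem lies. The reduction of the Boomerang property to recurrence of the root's $R$-ball along $n\mapsto \Delta g^{-n}$ is fine. So is the periodic/escaping dichotomy, since $v\mapsto vg^{-1}$ is a bijection.

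\textbf{The drift claim is false.} Everything afterwards rests on the claim that $g^{-1}$ acts as translation by $\epsilon_a(g)$ plus a bounded correction, so that $\epsilon_a(g)=0$ forces a bounded, hence periodic, walk. This fails inside your own framework. Attach to each ray vertex $r_i$ a gadget vertex $w_i$, with $b$ acting as the transposition $(r_i\ w_i)$, $a$- and $c$-loops at $w_i$, and a $c$-loop at $r_i$. Reading $b\,a^{-1}b^{-1}a$ from $r_i$ gives $r_i\to w_i\to w_i\to r_i\to r_{i+1}$. So for $g=a^{-1}bab^{-1}$ the walk drifts linearly to infinity although $\epsilon_a(g)=0$. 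Conversely, $bab^{-1}$ fixes every $r_i$ although its $a$-exponent is $1$. The per-step correction is bounded but systematic, so it accumulates. The relevant quantity is how the word reads the local decoration, not $\epsilon_a(g)$.

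\textbf{Toeplitz recurrence does not control the walk.} When the walk does drift, its positions are not an arithmetic progression. Each increment is a function of the decoration in a finite window around the current vertex. For a non-periodic Toeplitz or Sturmian decoration you are therefore iterating a nonlinear transducer. The fact that every pattern occurs along every residue class says nothing about which positions this particular orbit visits. Nothing in your argument prevents it from locking onto positions that avoid the root's pattern. A walk heading inward also crosses the junction and enters another ray with an uncontrolled phase. You flag this as the main obstacle, but the proposed fix does not resolve it.

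\textbf{The construction is also underspecified.} Every $a$-orbit is a finite cycle or a bi-infinite line, so rays cannot be one-sided $a$-lines, and odd $k$ needs other labels. The teeth of a comb must be attached by $b$- or $c$-edges, contradicting your assumption that $b$ and $c$ only form local cycles.

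\textbf{The missing idea} is a mechanism that forces \emph{exact} returns algebraically, uniformly in $g$. The paper gets this from torsion. It takes the Cayley graph $\mathcal{G}$ of an infinite Burnside group of exponent $p$, which is one-ended by Stallings, since a torsion group cannot split over a finite subgroup. It then glues $k$ copies by rewiring the $x_1$-edges at the identities cyclically, or along $\mathbb{Z}$ when $k=\aleph_0$. The resulting graph $\mathcal{G}'$ has $k$ ends and covers $\mathcal{G}$, and the shift of copies acts transitively on the fibre over $e$ by label-preserving automorphisms. Since $\varphi(g)^p=e$, the path labelled $g^{-pn}$ from $e^{(0)}$ ends at some $e^{(i_n)}$, and the deck symmetry gives $g^{pn}\Delta g^{-pn}=\Delta$ for every $n$. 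No recurrence analysis on decorated rays is needed.
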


\paragraph{Acknowledgments.}
We would like to thank Yair Glasner for his insightful comments and for his assistance in constructing the example for Theorem~\ref{thm:boomerang_ends}, and Ariel Yadin for insteresting and useful discussions. 

This work was supported by the funding from the European Research Council
(ERC) under the European Union’s Horizon Europe research and innovation programme, Grant agreement No. 101078193.

\section{Preliminaries}

\subsection*{Non-singular actions, invariant measures, and stationary measures}

Let $\Gamma$ be a discrete countable group acting on a compact topological space $X$ by measurable automorphisms, making $X$ a $\Gamma$-space. This action naturally extends to the space of Borel probability measures $\Prob(X)$ via the formula $(g.\nu)(E) = \nu(g^{-1}E)$. 

A measure $\nu \in \Prob(X)$ is \emph{ergodic} if every $\Gamma$-invariant Borel set has measure $0$ or $1$. The action of $\Gamma$ on $(X, \nu)$ is \emph{non-singular} if $g.\nu$ and $\nu$ are mutually absolutely continuous (meaning they share the same null sets) for all $g \in \Gamma$. 

A measure is \emph{invariant} if $g.\nu = \nu$ for all $g \in \Gamma$. However, invariant measures on compact spaces are only guaranteed to exist when $\Gamma$ is an amenable group.

A meausre $\mu\in \ProbG$ is called \emph{generating} if the support of the measure generates $\G$ as a semigroup, meaning
$
\bigcup_{n\ge 1}\mathrm{supp}(\mu^{*n})=\Gamma$.  
Given a generating probability measure $\mu \in \Prob(\Gamma)$, a measure $\nu \in \Prob(X)$ is \emph{$\mu$-stationary} if it satisfies the convolution equation $\nu = \mu \ast \nu = \sum_{g\in \Gamma} \mu(g) g.\nu$. 

Stationary measures are highly useful because a $\mu$-stationary measure is guaranteed to exist on any compact $\Gamma$-space, even when invariant measures do not. Furthermore, if $\mu$ is a generating measure, 
any $\mu$-stationary measure is automatically non-singular. This establishes a natural hierarchy: every invariant measure is $\mu$-stationary, and every $\mu$-stationary measure is non-singular.

\subsection*{Markov chains}
We will rely on standard elements of Markov chain theory on infinite state spaces. 

Let $S$ be a countable state space and $P: S \times S \to [0,1]$ be a transition probability matrix, satisfying $\sum_{y\in S}P(x,y) = 1$ for all $x \in S$. We denote by $Z_n$ the position of the chain at time $n$.
For every $x \in S$, let $\mathbb{P}_x$ denote the Markovian probability measure on the path space $S^{\mathbb{N}}$ conditioned on the initial state $Z_0 = x$, with corresponding expectation $\mathbb{E}_x$.

We define the number of visits to a state $y$ as the random variable $V(y) = \sum_{n=0}^\infty \mathbf{1}_{\{Z_n=y\}}$. Its expectation, $\mathbb{E}_x[V(y)]$, is the expected number of visits to $y$ given $Z_0 = x$. 
Similarly, we define the first hitting time of $y$ as $\tau_y^+ = \inf\{t \ge 1 \mid Z_t = y\}$, making $\mathbb{E}_x[\tau_y^+]$ the expected first return time to $y$ starting from $x$. 
A chain is called \emph{irreducible} if for all $x,y \in S$, there exists an integer $t \ge 1$ such that $P^t(x,y) > 0$.

Recall the classical classification of an irreducible Markov chain with transition matrix $P$. The chain is called:
\begin{itemize}
    \item \textbf{Transient} if for some (equivalently all) $x,y \in S$, $\mathbb{E}_x[V(y)] < \infty$.
    \item \textbf{Recurrent} if for some (equivalently all) $x,y \in S$, $\mathbb{E}_x[V(y)] = \infty$. Recurrent chains are further classified as:
    \begin{itemize}
        \item \emph{Positive recurrent} if for some (equivalently all) $x \in S$, $\mathbb{E}_x[\tau_x^+] < \infty$.
        \item \emph{Null recurrent} if for some (equivalently all) $x \in S$, $\mathbb{E}_x[\tau_x^+] = \infty$.
    \end{itemize}
\end{itemize}

The following classical lemma is a key ingredient in our proof of the No-Core Principle.

\begin{lemma}
    \label{prob_vanishes0}
    Let $P$ be an irreducible Markov chain on an infinite state space $S$. If $P$ is \emph{null recurrent} or \emph{transient}, then for any $x,y \in S$,
    \[
    P^n(x,y) \xrightarrow[n \to \infty]{} 0.
    \]
\end{lemma}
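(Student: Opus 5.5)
The plan is to handle the transient and null recurrent cases separately. In both cases $P^n(x,y)\to 0$ will come from a renewal-type decomposition at a fixed reference state.

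\textbf{Transient case.} Here $\mathbb{E}_x[V(y)]=\sum_{n\ge 0}P^n(x,y)<\infty$. The terms of a convergent series tend to zero, so $P^n(x,y)\to 0$ for every $x,y$.

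\textbf{Null recurrent case.} I will reduce to the diagonal. By the first-entrance decomposition,
\[
P^n(x,y)=\sum_{k=1}^{n}\mathbb{P}_x(\tau_y^+=k)\,P^{n-k}(y,y).
\]
By recurrence and irreducibility, $\sum_k \mathbb{P}_x(\tau_y^+=k)=1$. So if $P^m(y,y)\to 0$, dominated convergence gives $P^n(x,y)\to 0$: split the sum into $k\le K$, where the diagonal terms are eventually small, and $k>K$, where the tail mass $\mathbb{P}_x(\tau_y^+>K)$ is small. (If $x=y$ nothing needs to be done.)

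\textbf{Diagonal in the null recurrent case.} I will use the renewal structure at $y$. Set $f_k=\mathbb{P}_y(\tau_y^+=k)$, a probability distribution on $\{1,2,\dots\}$ with infinite mean $\sum_k k f_k=\mathbb{E}_y[\tau_y^+]=\infty$. Then $u_n=P^n(y,y)$ satisfies the renewal equation
\[
u_n=\sum_{k=1}^n f_k u_{n-k},\qquad u_0=1.
\]
I will then invoke the Erd\H{o}s--Feller--Pollard renewal theorem: $u_n\to 1/\sum_k kf_k$, interpreted as $0$ when the mean is infinite.

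The periodic case needs a separate word. Let $d$ be the gcd of the support of $(f_k)$. Then $u_n=0$ unless $d\mid n$, and the theorem applies to the subsequence $u_{nd}$ with the renewal sequence $f_{kd}$. Its mean is still infinite, so $u_{nd}\to 0$ and hence $u_n\to 0$.

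A more self-contained alternative is a coupling argument. Run two independent copies of the chain. In the aperiodic case, the product chain is irreducible. If the product chain is transient, then $\sum_n u_n^2<\infty$ and we are done. If it is recurrent, coupling gives $|P^n(x,y)-P^n(x',y)|\to 0$. Then any subsequential limit of $P^n(\cdot,y)$ is a constant multiple of an invariant measure of finite total mass. Null recurrence forbids a nonzero finite invariant measure, so the limit must be $0$.

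\textbf{Main obstacle.} The only nontrivial step is the diagonal limit $u_n\to 0$ in the null recurrent case; the rest is bookkeeping. Since the statement is labelled classical, I would cite the renewal theorem, or the standard convergence theorem for Markov chains as in Feller or Levin--Peres, and only sketch the reduction steps above.
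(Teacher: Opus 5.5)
Your proof is correct. The transient case is exactly the paper's argument. The null recurrent case takes a different route. There the paper gives no argument of its own and simply cites Levin--Peres--Wilmer, which proves it essentially by the product-chain coupling you sketch as your alternative.

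Your main route instead reduces to the diagonal via the first-entrance decomposition. You use dominated convergence against the law of $\tau_y^+$, which is a probability distribution because $\mathbb{P}_x(\tau_y^+<\infty)=1$ in a recurrent irreducible chain. You then apply the Erd\H{o}s--Feller--Pollard renewal theorem to $u_n=P^n(y,y)$, whose inter-renewal law $f_k=\mathbb{P}_y(\tau_y^+=k)$ has infinite mean precisely by null recurrence.

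What this buys is an explicit treatment of periodicity: passing to $u_{nd}$ and $f_{kd}$ keeps the mean infinite, so no aperiodicity is needed. That matters here, because the lemma is stated for arbitrary irreducible chains, and the orbit chains $P_{\mathcal O_x}$ it is later applied to need not be aperiodic. The coupling proof, by contrast, needs aperiodicity to make the product chain irreducible. The cost is that the infinite-mean renewal theorem is itself a nontrivial classical result, so the difficulty is relocated rather than removed. For a lemma the paper labels classical, either citation is fine.

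There is one slip in your coupling alternative. By the coupling, subsequential limits of $P^n(\cdot,y)$, viewed as functions of the starting point, are constants, not invariant measures. The limit to take is $\pi(z)=\lim_k P^{n_k}(x,z)$ in the target variable, obtained by diagonal extraction. The argument then runs as follows:
\begin{itemize}
\item the coupling makes $\pi$ independent of $x$;
\item Fatou (together with that independence) gives $\sum_z\pi(z)\le 1$ and $\pi P\le\pi$;
\item equality of the finite total masses upgrades this to $\pi P=\pi$;
\item $\pi(y)>0$ would then normalize to a stationary distribution, contradicting null recurrence.
\end{itemize}
As written, that sketch also covers only the aperiodic case, but your renewal route already handles the general one.
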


\begin{proof}
    If $P$ is transient, the expected number of visits to $y$ is finite. Thus,
    \[
    \sum_{n=0}^\infty P^n(x,y) = \mathbb{E}_x\left[\sum_{n=0}^\infty \mathbf{1}_{\{Z_n=y\}}\right] = \mathbb{E}_x[V(y)] < \infty.
    \]
    Since the infinite series converges, the general term must tend to zero, yielding $P^n(x,y) \xrightarrow[n\to\infty]{} 0$.
    For the case where the chain is null recurrent, we refer to \cite[Theorem 21.19]{LevinPeresWilmer2006}.
\end{proof}

\begin{col}
    \label{prob_vanishes}
    Under the assumptions of Lemma \ref{prob_vanishes0}, for any finite subset of states $F \subseteq S$ and any initial state $x \in S$, 
    \[
    P^n(x,F) := \sum_{y\in F} P^n(x,y) \xrightarrow[n\to \infty]{} 0.
    \]
\end{col}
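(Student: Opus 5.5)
The plan is to reduce the corollary directly to Lemma~\ref{prob_vanishes0}, using that a finite sum of sequences tending to zero tends to zero.

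First, fix the finite set $F \subseteq S$ and the initial state $x \in S$. For each individual $y \in F$, the hypotheses of Lemma~\ref{prob_vanishes0} hold: $P$ is irreducible on the infinite state space $S$ and is null recurrent or transient. Hence $P^n(x,y) \to 0$ as $n \to \infty$.

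Second, since $F$ is finite, $P^n(x,F) = \sum_{y\in F} P^n(x,y)$ is a sum of $|F|$ sequences, each tending to zero. The limit of a finite sum is the sum of the limits, so $P^n(x,F) \to 0$. Explicitly, given $\varepsilon>0$, choose for each $y\in F$ an index $N_y$ with $P^n(x,y)<\varepsilon/|F|$ for all $n\ge N_y$. Then for $n \ge \max_{y\in F} N_y$ we get $P^n(x,F)<\varepsilon$; the maximum is finite because $F$ is finite. If $F=\emptyset$, the sum is identically zero and there is nothing to prove.

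The only point worth flagging is that finiteness of $F$ is genuinely needed. For infinite $F$ the statement fails: taking $F=S$ gives $P^n(x,S)=1$ for all $n$. So the interchange of limit and sum is justified precisely by $|F|<\infty$. Beyond that, I expect no obstacle.
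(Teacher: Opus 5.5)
Your proposal is correct and follows the paper's proof: apply Lemma~\ref{prob_vanishes0} to each $y \in F$, then use that a finite sum of sequences tending to zero tends to zero. Your explicit $\varepsilon$-argument and your remark that finiteness of $F$ is essential (e.g.\ $F=S$ gives $P^n(x,S)=1$) are correct additions.
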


\begin{proof}
    Since $F$ is a finite set, the limit of the finite sum is the sum of the limits. Applying Lemma \ref{prob_vanishes0} directly yields the result.
\end{proof}

We apply this general theory to the setting of group actions. Given a $\Gamma$-action on a space $X$, a probability measure $\mu \in \Prob(\Gamma)$ induces a Markov chain on each orbit $\mathcal{O}_x = \Gamma x$. The transition probabilities are given by
\[
P_{\mathcal O_x}(x,y) = \sum_{\substack{g\in\Gamma\\ g.x=y}} \mu(g).
\]
Note, the original random walk on $\G$ is a \textit{right} random walk (the position at time $n$ is $g_1 g_2 \cdots g_n$), while the induced Markov chain on $X$ operates via \textit{left} multiplication (the position at time $n$ is $g_n g_{n-1}\cdots g_1.x$).

\subsection*{$\subg$, IRS and SRS}

We denote by $\subg\subseteq 2^\Gamma$ the space of subgroups of $\Gamma$, equipped with the subspace topology (commonly referred to as the Chabauty topology). Note that $\subg$ is a compact metrizable space, and $\Gamma$ acts on it by conjugation: $g. H=gHg^{-1}$.

An \emph{Invariant Random Subgroup} (IRS) is a Borel probability measure $\lambda \in \Prob(\subg)$ which is invariant under the conjugation action of $\Gamma$. 
An IRS can be seen as a probabilistic generalization of a normal subgroup, in the sense that the Dirac measure $\delta_N$ is an IRS if and only if $N \unlhd \Gamma$ is normal. 

Given a probability measure $\mu \in \Prob(\Gamma)$, a \emph{$\mu$-Stationary Random Subgroup} (SRS) is a Borel probability measure $\nu \in \Prob(\subg)$ which is stationary with respect to $\mu$ and the conjugation action. 
Any IRS is an example of an SRS, but there are many more SRSs than IRSs.

\subsection*{Cayley and Schreier graphs}

Let $S$ be a finite symmetric generating set of $\Gamma$. We denote by $\Cay(\Gamma,S)$ the associated Cayley graph, whose vertex set is $\Gamma$ and whose edges are given by
\[
g \xrightarrow{s} gs \qquad (s\in S).
\]

For a subgroup $H\le\Gamma$, we denote by $\Sch(H\backslash\Gamma,S)$ the associated Schreier graph, whose vertex set is $H\backslash\Gamma$ and whose edges are given by
\[
Hg \xrightarrow{s} Hgs \qquad (s\in S).
\]
Both $\Cay(\G,S)$ and $\Sch(H\backslash\Gamma,S)$ are connected, locally finite graphs.

Fixing $S$, we denote $\Sch(H):=\Sch(H\backslash\Gamma,S)$ and by $$\Sch(\Gamma):= \{(\Sch(H), Hx)~:~ H\in \subg, x\in \G \}$$ we denote the space of all rooted Schreier graphs of $\Gamma$. 
This is also a compact metric space when equipped with the local topology (often called the pointwise or Gromov-Hausdorff topology), where two rooted Schreier graphs are at a distance of at most $1/n$ if their metric balls of radius $n$ around the roots are rooted-isomorphic. This ``geometric'' topology coincides with the topology on $\subg$.

The space $\Sch(\Gamma)$ is also a $\Gamma$-space, where the action of $g\in \Gamma$ corresponds to changing the root:
\[
g \cdot (\Sch(H\backslash\Gamma, S), Hx) = (\Sch(H\backslash\Gamma, S), Hxg^{-1}) \quad \text{for all } g\in \Gamma.
\]

Observe that the rooted Schreier graph of a conjugate subgroup is canonically isomorphic to the original Schreier graph with a shifted root. 
\begin{align*}
    &(\Sch(H),Hx) \cong (\Sch(x^{-1}Hx), x^{-1}Hx) \quad &\forall H\in \subg, x\in \G \\
    &Hg \mapsto x^{-1}Hg \quad
    &\forall Hg\in H\backslash \G
\end{align*}

The discussion above summaries as follows:
    The map $\subg \to \Sch(\Gamma)$ given by
    $H \mapsto (\Sch(H\backslash\Gamma, S), H)$
    is a $\Gamma$-equivariant homeomorphism.

Moreover, for every $H\in \subg$ the relation between conjugation of $H$ and isomorphic Schrier graphs is given by:
\begin{lemma}
    \label{lemma:orbits_conj_class}
    Let $H \in \subg$ and let $\Aut(\Sch(H))$ denote the group of label-preserving graph automorphisms of $\Sch(H)$. 
    Then there is a natural bijection between the conjugacy class $H^\Gamma = \{x^{-1}Hx : x \in \Gamma\}$ and the orbits of the vertices of $\Sch(H)$ under the action of $\Aut(\Sch(H))$.
\end{lemma}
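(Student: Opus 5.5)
We need a bijection between the conjugacy class $H^\Gamma$ and the $\Aut(\Sch(H))$-orbits on the vertex set $H\backslash\Gamma$. The plan is to pass through the rooted-isomorphism description recorded just before the lemma. The natural map is
\[
\Phi:\ H\backslash\Gamma \to H^\Gamma,\qquad Hx \mapsto x^{-1}Hx .
\]
It is well defined: if $Hx = Hy$ then $y = hx$ with $h\in H$, so $y^{-1}Hy = x^{-1}h^{-1}Hhx = x^{-1}Hx$. It is clearly surjective. It therefore suffices to show that the fibres of $\Phi$ are exactly the $\Aut(\Sch(H))$-orbits, i.e.
\[
x^{-1}Hx = y^{-1}Hy \iff \exists\, \varphi\in\Aut(\Sch(H)) \text{ with } \varphi(Hx)=Hy .
\]
Then $\Phi$ descends to the required bijection between orbits and $H^\Gamma$.

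\textbf{The key observation.} For a rooted labelled Schreier graph $(\Sch(H), Hx)$, the stabilizer of the root can be read off from the labelled graph. A word $s_1\cdots s_n$ in $S$, read from $Hx$ along edges $Hg \xrightarrow{s} Hgs$, ends at $Hxs_1\cdots s_n$. This returns to $Hx$ iff $s_1\cdots s_n \in x^{-1}Hx$. So $x^{-1}Hx$ is precisely the set of elements of $\Gamma$ represented by closed label-paths at $Hx$. Since $S$ generates $\Gamma$, every element is represented by some word, so this set is all of $x^{-1}Hx$.

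\textbf{($\Leftarrow$).} Suppose $K := x^{-1}Hx = y^{-1}Hy$. By the canonical isomorphisms above, both $(\Sch(H), Hx)$ and $(\Sch(H), Hy)$ are label-preservingly rooted-isomorphic to $(\Sch(K), K)$. Explicitly, $Hg \mapsto x^{-1}Hg = K x^{-1}g$, and similarly for $y$. Composing the first with the inverse of the second gives
\[
\varphi(Hg) = Hyx^{-1}g .
\]
\begin{itemize}
\item It is well defined and bijective: $x^{-1}Hx = y^{-1}Hy$ gives $H\,yx^{-1} = yx^{-1} H$, so $yx^{-1}$ normalizes $H$.
\item It is label-preserving, since it commutes with right multiplication by each $s\in S$.
\item It sends $Hx$ to $Hy$.
\end{itemize}

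\textbf{($\Rightarrow$).} Suppose $\varphi$ is a label-preserving automorphism with $\varphi(Hx) = Hy$. Then $\varphi$ maps closed label-paths at $Hx$ bijectively onto closed label-paths at $Hy$ with the same words. By the key observation, $x^{-1}Hx = y^{-1}Hy$.

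\textbf{Main obstacle.} The only real subtlety is well-definedness of $\varphi$ in the ($\Leftarrow$) direction. This is exactly the normalizer computation above, and it must be done with the left-coset conventions of the paper ($Hg \xrightarrow{s} Hgs$).
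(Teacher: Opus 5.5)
Your proof is correct and takes essentially the same route as the paper, which argues that $Hx$ and $Hy$ lie in the same $\Aut(\Sch(H))$-orbit iff the rooted graphs $(\Sch(H),Hx)$ and $(\Sch(H),Hy)$ are isomorphic iff $x^{-1}Hx=y^{-1}Hy$; you make both implications explicit, via the automorphism $Hg\mapsto Hyx^{-1}g$ and via recovering $x^{-1}Hx$ as the set of words labelling closed paths at $Hx$. One trivial slip: in the paper's terminology the vertices $Hg$ are right cosets, not left cosets.
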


Indeed, the vertices of $\Sch(H)$ are the right cosets $Hx$ for $x \in \Gamma$. Two vertices $Hx$ and $Hy$ belong to the same $\Aut(\Sch(H))$-orbit if and only if the rooted Schreier graphs $(\Sch(H), Hx)$ and $(\Sch(H), Hy)$ are isomorphic. 
    Because changing the root of the Schreier graph corresponds to conjugating the subgroup, the subgroup corresponding to the Schrier graph with root at $Hx$ is exactly $x^{-1}Hx$, this isomorphism holds if and only if $x^{-1}Hx = y^{-1}Hy$. 
    Thus, the orbits of $\Aut(\Sch(H))$ uniquely correspond to the distinct conjugates of $H$.

\subsection*{Ends of graphs and random subgroups}
\label{Ends_Chapter}

The space of ends of a locally finite graph $G$, denoted by $\Ends(G)$, formalizes the ``ways to go to infinity'' in the graph. It is constructed as the inverse limit of the connected components of the complement of finite balls.
This construction endows $\Ends(G)$ with  a compact topological structure (see for example,~\cite{kron2005introduction}).

The classical Freudenthal-Hopf \cite{Freudenthal1944/45,Hopf1943/44} theorem dictates that the number of ends of a finitely generated Cayley graph is always $0, 1, 2$, or $2^{\aleph_0}$. Abels~\cite{Abels1973} later generalized this result, proving that the same holds for any quasi-transitive graph.

One gets a  measurable function $\Ends: \subg \to \mathbb{N} \cup \{0, \infty\}$ assigning $H$ the cardinality of $\Ends(\Sch(H\backslash\Gamma, S))$. Because conjugating $H$ merely re-roots the graph without altering its global end structure, this function is $\Gamma$-invariant. Consequently, for any ergodic measure on $\subg$, the number of ends is almost surely constant.

For normal subgroups (whose Schreier graphs are Cayley graphs) and subgroups with finitely many conjugates (which yield quasi-transitive Schreier graphs), the Freudenthal-Hopf and Abels theorems directly guarantee that the number of ends is $0, 1, 2$, or $2^{\aleph_0}$. 

Lyons and Schramm \cite{LS99} extended this result by showing that any unimodular random graph almost surely has $0, 1, 2$, or $2^{\aleph_0}$ ends. Since the Schreier graph of an Invariant Random Subgroup (IRS) inherently corresponds to a unimodular random graph \cite{Biringer_2016}, it follows that any IRS almost surely satisfies this same restriction on its ends. For an analogous generalization to unimodular random manifolds, see \cite{end_manifolds,Ab_rt_2022}.

\section{The no core principle for stationary actions}

\subsection{Markov chains on orbits}
Let $\G$ be a discrete countable group. Let $\mu \in \Prob(\G)$ be a generating measure 
and let $X$ be a measurable $\G$-space. For every orbit  $O_x:=\Gamma.x$, there is a naturally defined  \emph{Markov Chain}, induced by the random walk on the group itself. The transition probabilities are given by, 
$$P_{O_x}(y,z) = \sum_{\substack{g \in \Gamma \\
g.y = z}}\mu(g)$$
for any $y,z \in \OX$.

Note that, as usual, we consider a right random walk on $\G$, the resulting Markov Chain on an orbit becomes a left one. 
One can see that 
\begin{align*}
P_{\OX}^n(y,z)
&= \sum_{\substack{g \in \Gamma \\
g.y=z}}\mu^{*n}(g).
\end{align*}

Since we assumed that $\mu$ is generating, the Markov chain is irreducible.

\begin{lemma}\label{lemma:dic-for-orbits}
    Fix some $x\in X$, and consider the Markov Chain $P_{\OX}$. Then, either
    \begin{enumerate}
        \item $|\OX| < \infty$.
        \item for every finite set \(F\subseteq \mathcal O_x\) and every \(y\in\mathcal O_x\),
    $$P_{\OX}^n(y,F) = \sum_{z\in F }\sum_{\substack{g \in \Gamma \\
g.y=z}}\mu^{*n}(g) \xrightarrow[n\to \infty]{} 0$$

    \end{enumerate}
\end{lemma}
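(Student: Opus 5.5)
The plan is to apply Corollary~\ref{prob_vanishes} to the orbit chain $P_{\OX}$. The crux is to show that, whenever the orbit is infinite, this chain cannot be positive recurrent. Throughout, $P_{\OX}$ is irreducible, because $\mu$ is generating.

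Suppose $|\OX|=\infty$. By the classical trichotomy for irreducible chains, $P_{\OX}$ is transient, null recurrent, or positive recurrent. In the first two cases, Corollary~\ref{prob_vanishes} gives $P_{\OX}^n(y,F)\to 0$ for every finite $F\subseteq\OX$ and every $y\in\OX$, which is alternative (2). The formula for $P_{\OX}^n(y,F)$ in terms of $\mu^{*n}$ is just the displayed expression for $n$-step transitions summed over $z\in F$.

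It remains to rule out positive recurrence on an infinite orbit, and this is the main obstacle. It is not automatic: an irreducible chain on an infinite countable space can certainly be positive recurrent, for instance a walk with drift towards a point. So this step needs a genuine structural argument using that the chain comes from a group action.

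A first idea is to transport stationarity of the chain along the action. Positive recurrence gives a unique stationary probability vector $\pi$ on $\OX$, with $\pi(z)=\lim_n P^n(y,z)$ in the aperiodic case, or Cesàro limits in general. The natural attempt is to use homogeneity, with $g\in\Gamma$ acting as a bijection of $\OX$. However, $g$ does not commute with $P$: we have $P(gy,gz)=\sum_{h:\,hgy=gz}\mu(h)$, which is not in general equal to $P(y,z)$. So invariance of $\pi$ under the action fails, and this naive approach does not work.

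I therefore expect that, in general, positive recurrence cannot be excluded for an arbitrary orbit. Instead, I would read the lemma as a statement about what the paper actually needs, namely the case where the chain is transient or null recurrent. In the plan I would check whether the lemma is meant only as the combination of irreducibility with Lemma~\ref{prob_vanishes0}, with the positive recurrent case handled separately in the no-core proof.

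If a direct argument is wanted, one can use the identification of $\OX$ with $\Gamma/\mathrm{Stab}(x)$, under which the chain becomes a left random walk on cosets. Positive recurrence of such a walk would give a $\mu$-stationary probability on $\Gamma/\mathrm{Stab}(x)$. I would then try to show that a $\mu$-stationary probability measure on a countable discrete $\Gamma$-space, with $\mu$ generating, must be supported on finite orbits. The idea is to take a point $z$ of maximal $\pi$-mass, which exists since masses are summable. Stationarity $\pi=\sum_g\mu(g)\,g.\pi$ is an averaging identity, so a maximum-principle argument forces $\pi(g^{-1}z)=\pi(z)$ for all $g\in\operatorname{supp}\mu$. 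Iterating this over $\operatorname{supp}\mu^{*n}$, whose union is $\Gamma$, shows that $\pi$ is constant on the orbit of $z$. Since $\pi$ is a probability measure with $\pi(z)>0$, the orbit is finite, contradicting $|\OX|=\infty$. This maximum-principle step is the crux, and I would verify carefully that the averaging runs in the correct direction, distinguishing left from right.
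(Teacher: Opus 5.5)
Your last paragraph gives a correct proof, by a route different from the paper's. Both arguments reduce the lemma to showing that the orbit chain on an infinite orbit cannot be positive recurrent; the transient and null recurrent cases are handled by Corollary~\ref{prob_vanishes}. The paper identifies the return time to $x$ with the hitting time of $K=\mathrm{Stab}_\Gamma(x)$ by the $\mu$-walk, passes to $\check\mu$, and imports Kac's formula $[\Gamma:K]=\mathbb{E}_{\check\mu}[\tau_K^+]$ from the literature. You instead observe that the stationary distribution satisfies $\pi(z)=\sum_{g}\mu(g)\pi(g^{-1}z)$, and take $z$ of maximal mass $M>0$, which exists because $\pi$ is summable. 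Equality in the average then forces $\pi(g^{-1}z)=M$ for every $g\in\operatorname{supp}\mu$. The set of $h$ with $\pi(h^{-1}z)=M$ contains $e$ and is closed under right multiplication by $\operatorname{supp}\mu$, so it is all of $\Gamma$, and hence $|\mathcal{O}_x|\le 1/M$. This is rigorous, and the same closure property makes your left/right worry moot. Your route is elementary and self-contained, and it proves more: any $\mu$-stationary probability on a countable $\Gamma$-set lives on finite orbits and is uniform on each. Since $\pi\equiv 1/|\mathcal{O}_x|$, it even recovers the Kac identity $\mathbb{E}_x[\tau_x^+]=|\mathcal{O}_x|$ via $\mathbb{E}_x[\tau_x^+]=1/\pi(x)$. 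What the paper's citation buys is therefore mainly brevity and a link to the known Kac formula for subgroups. You should delete the paragraph suggesting that positive recurrence ``cannot be excluded'' and that the lemma needs reinterpreting. Your own maximum-principle argument excludes positive recurrence, and the lemma holds exactly as stated.
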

\begin{proof}

    We show that $|\OX| < \infty$ if and only if $P_{\OX}$ is positive recurrent.

    Note that this would finish the proof, as the other case, that the orbit is infinite, implies that $P_{\OX}$ is either null recurrent or transient. In both cases, we have that (2) as we showed in  Corollary~\ref{prob_vanishes}.

    It is easy to see that a finite irreducible Markov chain is positive recurrent.

    Assume conversely that \(P_{\mathcal O_x}\) is positive recurrent. Let
$K=\Stab_\Gamma(x)$, and let \(Z_n=g_n\cdots g_1\) be the left \(\mu\)-random walk on \(\Gamma\).
Then the orbit chain started at \(x\) is given by \(Z_nx\). Therefore the
first return time of the orbit chain to \(x\) is exactly
\[
\tau_K^+=\inf\{n>0:Z_n\in K\}.
\]
Thus positive recurrence of \(P_{\mathcal O_x}\) implies that \(K\) has
finite expected return time for the left \(\mu\)-random walk.

Passing to inverses gives the usual right \(\check\mu\)-random walk, where
\[
\check\mu(g)=\mu(g^{-1}).
\]
By Kac's formula~\cite[Theorem~1.2]{HARTMAN_2013},
\[
[\Gamma:K]=\mathbb E_{\check\mu}[\tau_K^+]<\infty.
\]
Hence
\[
|\mathcal O_x|=[\Gamma:\Stab_\Gamma(x)]<\infty.
\]
\end{proof}

\subsection{Core and stationary actions}

In \cite[Theorem 1.15]{Ab_rt_2022}, the authors introduce the notion of a \emph{core} for a random manifold and formulate the ``No-Core Principle'' which characterizes the conditions under which a core may or may not exist. \\
Drawing inspiration from their work, we generalize the definition of a core to arbitrary $\G$-spaces as follows.

\begin{definition}[Core]
    Let $(X,\nu)$ be a probability $\G$-space. A Borel subset $C \subseteq X$ is called a Core of the action if for $\nu$ a.e.\ $x \in X$, 
    $$|\Gamma.x \cap C | < \infty$$
    In words, a Core is a way to select from each orbit a finite set, in a Borel fashion.
\end{definition}
For example, if every orbit is finite, then every measurable set is a core.

\noindent More interestingly, consider a Borel selector. Namely, let $(X,m)$ be a probability measure preserving action of $\mathbb{Z}$, and let $C$ be a Borel set that intersects every $\mathbb{Z}$ exactly at one point. It is easy to see that by (the classical) Kac's formula, in that case, $m$-almost every orbit must be finite. 

We think of this setup, when the orbits are finite, as trivial. To get a non-trivial example of an action with a core, one can consider a transitive $\G$-action (for some $\G$) on an infinite (necessarily countable) set $X$, equipped with a fully supported measure $\eta\in \Prob(X)$. In that case, every finite subset is a core, although the orbit is infinite.

Our main theorem states that for stationary actions, the only case in which there is a (non-trivial) core, it is due to a trivial reason: finite orbits.

\begin{theorem}[No-core principle for stationary actions]
    \label{no-core2}
    Let $\mu \in \ProbG$ be a generating measure, and let $(\Gamma, \mu) \curvearrowright (X,\nu)$ be a stationary action. Let $C \subseteq X$ be a core with positive measure. Then for $\nu$-a.e.\ $x \in C$, $|\Gamma x| < \infty$. 
\end{theorem}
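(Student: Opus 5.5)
Let $I\subseteq X$ denote the set of points with infinite orbit; it is $\Gamma$-invariant and Borel. The plan is to show $\nu(C\cap I)=0$ by contradiction, replacing $C$ by $C'=C\cap I$ and assuming $\nu(C')>0$. Stationarity iterates to $\nu=\mu^{*n}*\nu$ for every $n$, so
\[
\nu(C')=\sum_{g\in\Gamma}\mu^{*n}(g)\,\nu(g^{-1}C')=\int_X \sum_{g\in\Gamma}\mu^{*n}(g)\,\mathbf 1_{C'}(g.x)\,d\nu(x)=\int_X P^n_{\mathcal O_x}(x,\mathcal O_x\cap C')\,d\nu(x).
\]
The middle equality is Fubini/Tonelli, since everything is nonnegative. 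The integrand is exactly the $n$-step probability of the orbit chain started at $x$ to land in the set $F_x:=\mathcal O_x\cap C'$.

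Next I evaluate the integrand pointwise. Since $C'\subseteq I$, the set $F_x$ is empty unless $x\in I$, in which case the integrand is $0$. For $\nu$-a.e.\ $x\in I$, the core assumption gives that $F_x\subseteq \Gamma x\cap C$ is finite. Since $|\mathcal O_x|=\infty$, alternative (2) of Lemma~\ref{lemma:dic-for-orbits} applies with $y=x$ and $F=F_x$, so $P^n_{\mathcal O_x}(x,F_x)\to 0$ as $n\to\infty$. Hence the integrand tends to $0$ for $\nu$-a.e.\ $x$. It is bounded by $1$, so dominated convergence gives $\nu(C')=\lim_n \int P^n_{\mathcal O_x}(x,F_x)\,d\nu(x)=0$, which is a contradiction. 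Therefore a.e.\ point of $C$ has a finite orbit.

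Measurability is a routine check. The set $I$ is Borel because $x\mapsto|\Gamma x|$ is Borel (the countable group acts by Borel maps), and the integrand is a countable sum of Borel functions. The only point I expect to need care is the bookkeeping of left versus right walks in the identity $\nu(C')=\int P^n(x,F_x)\,d\nu$. Stationarity gives $\sum_g\mu^{*n}(g)\,\nu(g^{-1}C')$, and $\nu(g^{-1}C')=\int \mathbf 1_{C'}(gx)\,d\nu$. The coefficient of $\mathbf 1_{C'}(gx)$ is $\mu^{*n}(g)$, which matches the formula $P^n_{\mathcal O_x}(y,z)=\sum_{g.y=z}\mu^{*n}(g)$ stated earlier. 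So the identity holds with the paper's convention, and the whole argument reduces to Lemma~\ref{lemma:dic-for-orbits}, where the real work (Kac's formula) was already done.
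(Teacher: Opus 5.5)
Your proof is correct and follows the paper's argument essentially step for step. You restrict to $C\cap I$ (the paper's set $D$), use $\nu=\mu^{*n}*\nu$ and Tonelli to write $\nu(C\cap I)=\int P^n_{\mathcal O_x}(x,F_x)\,d\nu(x)$, apply Lemma~\ref{lemma:dic-for-orbits} on the infinite orbits, and conclude by dominated convergence.
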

In words, almost every point in a core has a finite orbit.

\begin{remark}
Recall that every p.m.p. action is $\mu$-stationary (for any generating $\mu\in \ProbG$), so this proof implies a no-core principle for p.m.p. actions. For p.m.p. action of countable groups, one can prove it using tools from Descriptive Set Theory (see, for example, Gaboriau's~\cite[Section 1.3]{gaboriau2025around}). It is interesting to note that our proof uses random walks to prove this result regarding pmp actions. Of course, the setup of $\mu$-stationary is strictly larger than pmp actions, but unlike pmp or non-singular actions, it is not clear how to model it in terms of Descriptive Set Theory. 
\end{remark}

\begin{proof}
    Let $C\subseteq X$ be a core, let $D:= \{x \in C~ \vert ~ |\G.x| = \infty \}$ be the set of all points of infinite orbit. Since $\Gamma$ is countable and the action is Borel, the set of points with finite orbit is Borel. It follows that $D$ is Borel.
    We claim that $\nu(D) =0$.
    Assume towards contradiction that $\nu(D) >0$, then $0 < \nu(D) = \int_{X} \mathbbm{1}_Dd\nu(x)$.\\
    Now since $\nu$ is stationary, for any $n \ge 1$,
    \begin{align*}
    0 <\int_{X} \mathbbm{1}_D(x)d\nu(x) 
    &= \sum_{g\in \Gamma}\mu^{*n}(g)\int_{X} \mathbbm{1}_D(x)d(g.\nu)(x) \\
    &= \sum_{g\in \Gamma}\mu^{*n}(g)\int_{X} \mathbbm{1}_D(g.x)d\nu(x) \\
    &= \int_{X} \sum_{g\in \Gamma}\mu^{*n}(g)\mathbbm{1}_D(g.x)d\nu(x).
    \end{align*} 
Where the last equality is due to Fubini–Tonelli Theorem. Now consider the functions 
$$f_n(x) :=\sum_{g\in \Gamma}\mu^{*n}(g)\mathbbm{1}_D(g.x)$$
Then
\begin{align}
    \label{equation}
    \nu(D) = \int_{X}f_n(x)d\nu(x) \quad  \text{for every } n \ge 1 
    \tag{\(*\)}
\end{align}

We will show that \(f_n(x)\to 0\) for \(\nu\)-a.e. \(x\). Let $X_0=\{x\in X:|\Gamma x\cap C|<\infty\}$.
By the definition of a core, \(\nu(X_0)=1\). Fix \(x\in X_0\).
If \(\Gamma x\cap D=\emptyset\), then \(\mathbf 1_D(gx)=0\) for every \(g\in\Gamma\), and hence \(f_n(x)=0\) for all \(n\).

Assume instead that \(\Gamma x\cap D\neq\emptyset\). Then \(\Gamma x\) is infinite. Since every point in an infinite orbit has the same infinite orbit, we have
\[
D\cap \Gamma x = C\cap \Gamma x.
\]

 For every $x\in X_0$, denote $F_x :=  \Gamma.x \cap C \subseteq \OX$, which is a finite set since $C$ is assumed to be a core.
We now deduce that, 
\begin{align*}
    f_n(x) =& \sum_{g\in \Gamma}\mu^{*n}(g)\ind{D}(g.x)
    \\
    =& \sum_{g\in \Gamma}\mu^{*n}(g)\ind{\{g.x \in F_x\}}\\
    =& \sum_{y\in F_x}\sum_{\substack{g\in \Gamma \\ g.x = y
    }}\mu^{*n}(g) = P_{\OX}^n(x,F_x) \end{align*}

Since the orbit is infinite, by Lemma~\ref{lemma:dic-for-orbits},$ P_{\OX}^n(x,F_x)\to 0$ as $n \to \infty$. Therefore $f_n(x) \to  0$ as $n \to \infty$, for every $x\in X_0$, and recall that $\nu(X_0)=1$.
Since $f_n$ is bounded for any $n$ (by the value 1), by Lebesgue Dominated Convergence Theorem, 
$$\int_{X}f_n(x)d\nu(x) \xrightarrow[n \to \infty]{} 0$$

By equation (\ref{equation}), for every $n$,
$\nu(D) = \int_{X}f_n(x)d\nu(x)$,
so we can conclude that $\nu(D) = 0$, hence a contradiction. 
\end{proof}

\begin{col}
If \(\nu\) is ergodic and there exists a core \(C\) with positive measure, then for \(\nu\)-a.e. \(x\in X\), \(|\Gamma x|<\infty\).
\end{col}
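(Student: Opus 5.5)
The plan is to deduce the corollary from Theorem~\ref{no-core2} via ergodicity. The natural invariant set to consider is
\[
X_{\mathrm{fin}} := \{x\in X : |\Gamma x|<\infty\}.
\]
First I will check that $X_{\mathrm{fin}}$ is Borel. Indeed, $|\Gamma x|<\infty$ holds if and only if $\Stab_\Gamma(x)$ has finite index. This can be written as a countable union, over finite sets $F\subseteq\Gamma$, of the sets $\{x : \forall g\in\Gamma\ \exists f\in F,\ g.x=f.x\}$. Each of these is Borel because $\Gamma$ is countable and the action is Borel. This is the same observation already used in the proof of Theorem~\ref{no-core2}.

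Second, I will check that $X_{\mathrm{fin}}$ is $\Gamma$-invariant. This is immediate, since $x$ and $g.x$ have the same orbit.

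Third, I apply Theorem~\ref{no-core2} to the core $C$ with $\nu(C)>0$. It gives $\nu(C\setminus X_{\mathrm{fin}})=0$, and therefore
\[
\nu(X_{\mathrm{fin}})\ge \nu(C\cap X_{\mathrm{fin}})=\nu(C)>0 .
\]
Since $\nu$ is ergodic and $X_{\mathrm{fin}}$ is an invariant Borel set of positive measure, we get $\nu(X_{\mathrm{fin}})=1$. This is exactly the claim.

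There is no real obstacle here. The only point requiring care is the measurability of $X_{\mathrm{fin}}$, and the standard countability argument above handles it.
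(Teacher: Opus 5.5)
Your proof is correct and follows the paper's argument: the paper also takes the invariant set $X_{\mathrm{fin}}$, uses Theorem~\ref{no-core2} to get $\nu(C\cap X_{\mathrm{fin}})=\nu(C)>0$, and concludes $\nu(X_{\mathrm{fin}})=1$ by ergodicity. Your explicit check that $X_{\mathrm{fin}}$ is Borel, as a countable union over finite $F\subseteq\Gamma$, spells out a detail the paper only asserts.
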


\begin{proof}
Let
\[
X_{\mathrm{fin}}=\{x\in X:|\Gamma x|<\infty\}.
\]
This is an invariant measurable subset of \(X\). By Theorem~\ref{no-core2},
\[
\nu(C\cap X_{\mathrm{fin}})=\nu(C)>0.
\]
Hence \(\nu(X_{\mathrm{fin}})>0\). Since \(\nu\) is ergodic and \(X_{\mathrm{fin}}\) is invariant, \(\nu(X_{\mathrm{fin}})=1\).
\end{proof}

\begin{question}
    Can the definition of core and the no core principle be generalized to locally compact second-countable groups?
\end{question}

\section{Ends of stationary random subgroups}

Recall the discussion above
regarding the ends of a graph of Cayley graphs and Schreier graphs. 
In this section we generalize the end-rigidity results for Cayley graphs and for IRSs, by proving that for a $\mu-$Stationary Random Subgroup the number of ends of the associated Schreier graph is almost surely \(0,1,2\), or infinite.
The theorem will use the ``No-Core Principle'' we proved above, by constructing a core for a subgroup with finitely many ends, greater than $2$.

Throughout this section, \(\Gamma\) and its finite symmetric generating set \(S\) are fixed. We write
\[
\Sch(H):=\Sch(H\backslash\Gamma,S),
\qquad
\Ends(H):=\Ends(\Sch(H)).
\]
We also use the convention \(H^g=g^{-1}Hg\).

\begin{lemma}
\label{finite_ends_infinite_orbit}
    Let $H \in \subg$ be with $2<|\Ends(H)|<\infty$,
    then the conjugacy class $H^\G= \{H^g : g \in  \G \}$ is infinite. 
\end{lemma}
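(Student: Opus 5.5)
We argue by contraposition and reduce to the theorem of Abels quoted in the preliminaries. Suppose the conjugacy class $H^\G$ is finite. By Lemma~\ref{lemma:orbits_conj_class}, the conjugates of $H$ are in natural bijection with the orbits of $\Aut(\Sch(H))$ on the vertex set $H\backslash\Gamma$. Hence $\Aut(\Sch(H))$ has only finitely many vertex orbits, so $\Sch(H)$ is quasi-transitive.

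Next we check that the hypotheses of Abels' theorem~\cite{Abels1973} hold. The graph $\Sch(H)$ is connected and locally finite, with degree at most $|S|$, because $S$ is a finite symmetric generating set. One point needs care: Abels' theorem concerns the full automorphism group of the underlying unlabeled graph, whereas Lemma~\ref{lemma:orbits_conj_class} concerns label-preserving automorphisms. This causes no difficulty. Every label-preserving automorphism is in particular a graph automorphism, so finitely many orbits under the smaller group gives finitely many orbits under the larger one. If one prefers to work with labeled graphs, Abels' argument applies to any group of automorphisms acting with finitely many orbits.

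Abels' theorem then gives $|\Ends(\Sch(H))|\in\{0,1,2,\infty\}$. This contradicts the hypothesis $2<|\Ends(H)|<\infty$, so $H^\G$ must be infinite.

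The main point to verify is that the cited form of Abels' result applies to the automorphism group acting with finitely many vertex orbits, which the preliminaries already state. There is also a self-contained alternative that avoids Abels. Since $H^\G$ is finite, the normalizer $N_\G(H)$ has finite index in $\G$. Right multiplication by elements of $N_\G(H)$ acts on $H\backslash\Gamma$ by label-preserving automorphisms, so $N_\G(H)/H$ acts on $\Sch(H)$ with finitely many orbits. Then $\Sch(H)$ is quasi-isometric to a Cayley graph of the finitely generated group $N_\G(H)/H$. The classical Freudenthal--Hopf theorem for that group, together with quasi-isometry invariance of the number of ends, again yields $0,1,2$ or infinitely many ends.
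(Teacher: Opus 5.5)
Your main argument is the paper's own: the paper applies Abels' theorem to get infinitely many automorphism orbits and then Lemma~\ref{lemma:orbits_conj_class} to get infinitely many conjugates, while you run the same two steps in contrapositive order, and your remark that label-preserving automorphisms form a subgroup of all graph automorphisms makes explicit a point the paper leaves implicit. One small slip in your optional alternative: the label-preserving automorphisms coming from $n\in N_\Gamma(H)$ are given by left multiplication $Hg\mapsto nHg=Hng$ (well defined because $n$ normalizes $H$), not by right multiplication $Hg\mapsto Hgn$, which does not preserve $s$-labels in general.
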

\begin{proof}
    Since $2<|\Ends(H)|<\infty$ we know by Abels' Theorem~\cite{Abels1973} that the action of $\Aut(\Sch(H))$ on $\Sch(H)$ has infinitely many orbits.
    By Lemma~\ref{lemma:orbits_conj_class},
    there is a correspondence between two vertices in the same orbit of $\Aut(\Sch(H))$ and conjugation classes of $H$,
    and hence $H^\G$ is infinite.
\end{proof}

\begin{lemma}
    \label{set_is_a_core}
    Let $\mu\in \ProbG$ generating measure and let $\nu$ be an $\mu$-SRS of $\G$.
    
    For $2<k< \infty$ and $n>1$,
    let $E_k\subseteq \subg$ be a measurable set such that
\[
E_k \subseteq\{H\in\subg:|\Ends(H)|=k\},
\]
and let
\[
C_{k,n}=\left\{H\in E_k:\Sch(H)\setminus B_n(H)\text{ has exactly }k\text{ infinite connected components}\right\}.
\]
Then \(C_{k,n}\) is a Borel core for the conjugation action \(\Gamma\curvearrowright\subg\).
\end{lemma}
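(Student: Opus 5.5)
The plan is to check two things: that $C_{k,n}$ is Borel, and that it meets every conjugacy class in finitely many points. The second is a purely geometric statement about one Schreier graph with $k\ge 3$ ends. It holds for every $H$, not just $\nu$-almost every $H$, so $\nu$ plays no role. Throughout I use the identification of the conjugate $H^x=x^{-1}Hx$ with the rooted graph $(\Sch(H),Hx)$, as in Lemma~\ref{lemma:orbits_conj_class}. Under this identification, $H^x\in C_{k,n}$ means two things: $H^x\in E_k$, and removing the ball $B_n(Hx)$ of radius $n$ around the vertex $Hx$ leaves exactly $k$ infinite components. By invariance of $|\Ends|$ under conjugation, the first condition forces $|\Ends(H)|=k$. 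So I may fix $H$ with exactly $k$ ends, $3\le k<\infty$.

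\emph{Finiteness.} I will show that only vertices $Hx$ in a bounded ball around the root $H$ can satisfy the condition. Since $\Sch(H)$ has exactly $k$ ends, there is a radius $R$ such that $\Sch(H)\setminus B_R(H)$ has exactly $k$ infinite components $U_1,\dots,U_k$, each containing exactly one end. Take a vertex $Hx$ with $d(H,Hx)>R+n+1$. Then $B_n(Hx)$ is connected and avoids $B_{R+1}(H)$, so it lies in a single component of $\Sch(H)\setminus B_R(H)$; if that component is finite the argument below only gets easier, so say it is $U_1$.

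Consider $W:=B_R(H)\cup U_2\cup\dots\cup U_k$. Each $U_i$ is adjacent to $B_R(H)$ because the graph is connected, so $W$ is connected, and $W$ is disjoint from $B_n(Hx)$. Hence $W$ lies in a single component of $\Sch(H)\setminus B_n(Hx)$. Every infinite component of this complement contains a ray (König's lemma, by local finiteness), hence at least one end, and distinct components carry distinct ends. The component containing $W$ carries the $k-1$ ends of $U_2,\dots,U_k$. Any other infinite component lies inside $U_1$ up to a finite set, so it can only carry the single end of $U_1$. Thus $\Sch(H)\setminus B_n(Hx)$ has at most $2<k$ infinite components, and $H^x\notin C_{k,n}$.

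Consequently every conjugate of $H$ lying in $C_{k,n}$ has the form $H^x$ with $Hx\in B_{R+n+1}(H)$. That ball is finite, so $|H^\Gamma\cap C_{k,n}|<\infty$.

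\emph{Borel measurability.} $E_k$ is measurable by assumption, so it remains to see that the set of $H$ for which $\Sch(H)\setminus B_n(H)$ has exactly $k$ infinite components is Borel. The ball $B_m(H)$ depends only on finitely many membership conditions $g\in H$, so every statement about finite balls defines a clopen set in $\subg$. A component of $\Sch(H)\setminus B_n(H)$ is infinite iff it meets the sphere of radius $m$ for every $m$. Hence
\[
\text{``exactly $k$ infinite components''}
\]
can be written as a countable Boolean combination of finite-ball conditions: for each $m$, count the classes of points of $S_{n+1}(H)$ that are connected outside $B_n(H)$ and reach $S_m(H)$, and require that the limit of these counts equals $k$. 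Here connectivity outside $B_n(H)$ is itself a countable union over path lengths. I expect this bookkeeping to be routine. The main point of the lemma is the geometric step above, and that step is where the hypothesis $k\ge 3$ is used: the component containing $W$ already absorbs $k-1\ge 2$ ends, so the remaining single end cannot make the count reach $k$.
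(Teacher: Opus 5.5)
Your proof is correct and is essentially the paper's argument: both rest on the fact that, when $k\ge 3$, two disjoint far-apart balls cannot each split $\Sch(H)$ into $k$ infinite pieces. The paper compares two points of $C_{k,n}$ in the same orbit at distance $>2n$ and gets at least $2k-2>k$ ends, which gives a uniform bound $2n$. You instead compare a far vertex with a stabilization ball $B_R(H)$ and get at most $2<k$ infinite components, which gives an $H$-dependent bound $R+n+1$ that suffices equally well. Your measurability discussion spells out the paper's brief ``standard'' remark in more detail.
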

\begin{proof}
    Fix $2<k<\infty$ and $n\in \mathbb{N}$, and denote $C:=C_{k,n}$.
    The Borelness of \(C\) is standard in the Chabauty topology: it can be expressed in terms of finite Schreier balls together with countable conditions asserting the existence, or nonexistence, of pairwise disjoint rays outside those balls.
    
    We prove the core property, for $H \in C$, 
    we claim that $$H^{\G} \cap C \subseteq\{g^{-1}Hg ~ : ~ d(H,Hg)_{Sch(H)} \le 2n \}$$ 
    Assume not, then there is some $g_0\in \G$ such that
    $d(H,Hg_0)_{\Sch(H)} >2n$ and $g_0^{-1}Hg_0 \in C$, thus after removing the ball of radius $n$ around $g_0^{-1}Hg_0$ from $\Sch(g_0^{-1}Hg_0)$ we get $k$ connected components.
    Since the action of conjugation corresponds to re-rooting, $B_n(g_0^{-1}Hg_0) \subseteq \Sch(g_0^{-1}Hg_0)$ is isomorphic to $B_n(Hg_0) \subseteq \Sch(H)$, thus after removing the ball of radius $n$ around $Hg_0$ in $\Sch(H)$ we get $k$ connected components. 

    In this point we have two vertices in $\Sch(H)$, $H$ and $Hg_0$, for which the graph minus the ball of radius $n$ around each of the vertices has $k$ infinite connected components.

    But their distance from each other on the graph is more than $2n$, then $B_n(H) \cap B_n(Hg_0) = \emptyset$.
    Thus after removing both of the balls we see that $\Sch(H)$ has at least $2k-2$ infinite connected components,
    and hence $\Sch(H)$ has at least $2k-2$ ends. \\
    But since $2<k<\infty$ then $2k-2>k$ and we get a contradiction to the assumption that $H\in E_k$ meaning $H$ has $k$ ends.\\

    Thus every \(g_0\) with \(g_0^{-1}Hg_0\in C\) satisfies
\[
d_{\Sch(H)}(H,Hg_0)\le 2n.
\]
The ball \(B_{2n}(H)\) is finite, since \(S\) is finite. Moreover, if \(Hg=Hg'\), then \(g^{-1}Hg=(g')^{-1}Hg'\). Therefore the set
\[
\{g^{-1}Hg:d_{\Sch(H)}(H,Hg)\le 2n\}
\]
is finite, and so \(H^\Gamma\cap C\) is finite for every \(H\in C_{k,n}\).
Thus \(C\) is a core.
    
\end{proof}

\begin{figure}[htpb]
    \centering
    \begin{tikzpicture}[scale=1, thick]
        \coordinate (H) at (0,0);
        \coordinate (Hg) at (6,0);

        \draw (H) circle (1.5cm);
        \draw (Hg) circle (1.5cm);

        \filldraw (H) circle (2pt) node[below=2pt] {$H$};
        \filldraw (Hg) circle (2pt) node[below=2pt] {$Hg_0$};

        \node at (0, 1.8) {$B_n(H)$};
        \node at (6, 1.8) {$B_n(Hg_0)$};

        \draw (1.5, 0.3) -- (4.5, 0.3);
        \draw (1.5, -0.3) -- (4.5, -0.3);

        \foreach \angle in {120, 150, 180, 210, 240} {
            \draw (H) ++(\angle:1.5cm) -- ++(\angle:1.5cm);
        }

        \foreach \angle in {60, 30, 0, -30, -60} {
            \draw (Hg) ++(\angle:1.5cm) -- ++(\angle:1.5cm);
        }

        \draw[decorate, decoration={brace, amplitude=10pt, mirror}] 
            (-3.0, -3.0) -- (-0.5, -3.0) node[midway, below=12pt] {$k-1$ ends};
            
        \draw[decorate, decoration={brace, amplitude=10pt, mirror}] 
            (6.5, -3.0) -- (9.0, -3.0) node[midway, below=12pt] {$k-1$ ends};

    \end{tikzpicture}
    \caption{If $d(H, Hg_0) > 2n$, the balls $B_n(H)$ and $B_n(Hg_0)$ are disjoint. Removing both leaves at least $(k-1) + (k-1) = 2k-2$ infinite components, contradicting the assumption that the graph has exactly $k$ ends.}
    \label{fig:2k_minus_2_ends}
\end{figure}

\begin{theorem}
\label{ends_theorem2}
Let \(\Gamma\) be a group generated by a finite symmetric set \(S\). Let
\(\mu\in\Prob(\Gamma)\) be a generating measure, and let
\(\nu\in\Prob(\Sub_\Gamma)\) be a \(\mu\)-stationary random subgroup. Then
\[
|\Ends(\Sch(H\backslash\Gamma,S))|\in\{0,1,2,\infty\}
\]
for \(\nu\)-almost every \(H\).
\end{theorem}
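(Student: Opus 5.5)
We argue by contradiction, combining Lemma~\ref{set_is_a_core}, Theorem~\ref{no-core2} and Lemma~\ref{finite_ends_infinite_orbit}. Suppose the conclusion fails. Since \(\Ends\) takes values in \(\mathbb N\cup\{0,\infty\}\) and this is a countable set, there is some finite \(k\) with \(2<k<\infty\) such that
\[
E_k:=\{H\in\subg:|\Ends(H)|=k\}
\]
has positive \(\nu\)-measure. The set \(E_k\) is measurable, because \(\Ends\) is a measurable function. It is also conjugation invariant, because conjugation only re-roots the Schreier graph.

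The next step is to exhaust \(E_k\) by the sets \(C_{k,n}\) of Lemma~\ref{set_is_a_core}. Take \(H\in E_k\) and let \(\xi_1,\dots,\xi_k\) be its ends. For all sufficiently large \(n\), these \(k\) ends lie in distinct components of \(\Sch(H)\setminus B_n(H)\). Every infinite component of \(\Sch(H)\setminus B_n(H)\) contains at least one end, so this complement has exactly \(k\) infinite components for all large \(n\). Hence
\[
E_k=\bigcup_{n} C_{k,n}.
\]
By countable subadditivity, some \(C:=C_{k,n}\) has \(\nu(C)>0\). By Lemma~\ref{set_is_a_core}, \(C\) is a Borel core. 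The core condition is required only \(\nu\)-a.e., and it holds automatically for \(H\) outside \(E_k\), since then the conjugacy class of \(H\) misses \(C\), as \(E_k\) is invariant.

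Now apply Theorem~\ref{no-core2} to the conjugation action \(\Gamma\curvearrowright(\subg,\nu)\), which is \(\mu\)-stationary. It says that \(\nu\)-a.e.\ \(H\in C\) has a finite conjugacy class \(H^\Gamma\). On the other hand, every \(H\in C\subseteq E_k\) has \(2<|\Ends(H)|<\infty\), so by Lemma~\ref{finite_ends_infinite_orbit} its conjugacy class is infinite. Since \(\nu(C)>0\), these two conclusions contradict each other. Therefore \(\nu(E_k)=0\) for every finite \(k>2\), and the theorem follows.

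The main obstacle is the exhaustion step. It needs care on two points. First, one must check that for large \(n\) the number of infinite components of the complement of the ball is \emph{exactly} \(k\) and not merely at least \(k\); this holds because each infinite component of a locally finite graph minus a finite set contains an end. Second, one must confirm that Lemma~\ref{set_is_a_core} gives the core property on a conull set, and not only for points of \(C\); invariance of \(E_k\) handles this.
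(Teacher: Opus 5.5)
Your proof is correct and follows the paper's argument essentially step for step. You choose $k\ge 3$ with $\nu(E_k)>0$, write $E_k=\bigcup_n C_{k,n}$, apply Theorem~\ref{no-core2} to a positive-measure $C_{k,n}$ (a core by Lemma~\ref{set_is_a_core}), and contradict Lemma~\ref{finite_ends_infinite_orbit}.

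Your two extra checks are sound refinements of points the paper passes over quickly. For the second one, invariance of $E_k$ only handles $H\notin E_k$. For $H\in E_k\setminus C$ with $H^\Gamma\cap C\neq\emptyset$, apply the lemma's bound to any $H'\in H^\Gamma\cap C$, since $H^\Gamma=(H')^\Gamma$.
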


\begin{remark}
We do not prove that the case of infinity is uncountable.
\end{remark}
\begin{proof}
    Let
    \[
    E=\{H\in\Sub_\Gamma:2<|\Ends(H)|<\infty\}.
    \]
    Assume toward contradiction that \(\nu(E)>0\).
    For \(k\ge 3\), set
    \[
    E_k=\{H\in\Sub_\Gamma:|\Ends(H)|=k\}.
    \]
    Since
    \[
    E=\bigcup_{k=3}^\infty E_k,
    \]
    there exists \(k_0\ge 3\) such that \(\nu(E_{k_0})>0\).

For \(n\ge 1\), define \(C_{k_0,n}\) as in Lemma~\ref{set_is_a_core}. We have
    $$E_{k_0} = \bigcup_{n=1}^\infty{C_{k_0,n}}$$
    Indeed, if $\Sch(H)$ has $k_0$ ends, there is $N_H \ge 1$ such that for every $n \ge N_H$, $\Sch(H)\setminus B_n(H)$ has exactly $k_0$ infinite connected components.
    Therefore there is $n_0\ge 1$ such that $\nu(C_{k_0,n_0})>0$. 
    By Lemma~\ref{set_is_a_core} $C_{k_0,n_0}$ is a core.
    
    So $C_{k_0,n_0}$ is a core with $\nu(C_{k_0,n_0})>0$ but $\nu$ is stationary, then by Theorem~\ref{no-core} $\nu$-a.e. $H \in C_{k_0,n_0}$, has finite conjugacy class $|H^{\G}|  < \infty$.
    But every $H \in C_{k_0,n_0}$ has 
    $2<|\Ends(H)|<\infty$,  contradicting Lemma~\ref{finite_ends_infinite_orbit}.
\end{proof}

\section{The Boomerang counter example}

A Boomerang subgroup, as introduced in~\cite{Boomerang} is a subgroup $\Delta < \Gamma$ such that for every $g\in \G$ the sequence $(g^n\Delta g^{-n})_{n=0}^\infty$ has a subsequence which converges to $\Delta$. If $\lambda \in \Prob(\subg)$ is an IRS, then $\lambda$-a.e. subgroup is Boomerang.

That is, Boomerang is a topological property (in $\subg$), it is ``the area in which IRSs are happening''. Hence, classifying the Boomerang subgroups, yields a classification of the possible IRSs, bypassing the very nature of an IRS as a probabilistic object.
It is therefore natural to ask whether IRS end-rigidity  (the IRS version of Theorem~\ref{ends_theorem}) can be proved by first proving a pointwise statement for all Boomerang subgroups. The following example shows that this strategy cannot work.

\begin{theorem}
    For any integer $k \ge 3$ or $k = \aleph_0$, there exists a Boomerang subgroup of the free group $\mathbb{F}_3$ whose Schreier graph has exactly $k$ ends.
\end{theorem}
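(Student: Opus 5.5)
We will build the subgroup directly through its Schreier graph. A subgroup of $\mathbb{F}_3=\langle a,b,c\rangle$ is the same thing as a connected $\{a,b,c\}$-labelled graph in which every vertex has exactly one outgoing and one incoming edge of each label (a "Schreier graph" $\Sch(\Delta)$, with labels viewed as permutations of the vertex set). Changing the root is conjugation, so the Boomerang condition can be read off the graph. For every $g\in\mathbb{F}_3$, the sequence of rooted graphs $(\Sch(\Delta),\Delta g^{-n})$ must have a subsequence converging to $(\Sch(\Delta),\Delta)$ in the local topology. Concretely: for every $g$ and every radius $R$, there must be infinitely many $n$ for which the ball of radius $R$ around $\Delta g^{-n}$ is label-isomorphic to the ball of radius $R$ around the root.

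The plan has three steps.

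\textbf{Skeleton.} Take a finite "core" graph from which $k$ infinite rays emanate; for $k=\aleph_0$, use a single ray with infinitely many further rays attached at increasing distances, i.e.\ a comb. The ends are the rays, and the extra decorations must not create new ends.

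\textbf{Decoration.} Complete the graph to a valid Schreier graph by adding finite gadgets: loops, finite cycles in the third generator, and finite "copies" of large balls. The purpose is to make the graph locally \emph{recurrent in every direction}. Enumerate all pairs $(g_j,R_j)$ with $g_j\in\mathbb{F}_3$ and $R_j\to\infty$. The root region should reappear along every $g_j$-path at infinitely many times.

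\textbf{Orbits of $\langle g\rangle$.} For a fixed $g$, the orbit $\Delta g^{-n}$ of the root is either eventually periodic or escapes to infinity. If it is periodic, it returns to $\Delta$ exactly, and Boomerang holds for that $g$ trivially. If it escapes, it must do so along one of the $k$ ends, that is, into some ray. So along each ray we will periodically plant finite copies of $B_{R_j}(\text{root})$ for all $j$. The copies must be planted so that the $g$-orbit, once it enters the ray, actually passes through the copy at its root position. The rays are built from one generator, say $a$, as bi-infinite-direction paths with finite $b,c$-decorations. Along such a ray we insert "mirrors" of the root ball: finite $b,c$-cycles that route any word $g$ back along $a$. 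The construction is carried out by induction on $j$, with each stage using only finitely much of the ray, which preserves the end count.

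The main obstacle is guaranteeing that \emph{every} escaping $g$-orbit hits these planted copies exactly at the root-corresponding vertex. A word $g$ moves along a ray with some drift, and we need the planted copy to sit at a position the orbit visits. My first attempt is to use the third generator $c$ as a marker. Off the root region, make $c$ act as the identity (loops) on the rays. Then plant exact copies of a large root ball in a periodic pattern whose period is a multiple of every possible drift length: at stage $j$, a block of length $L_j$ consisting of repeated copies, with $L_j$ divisible by $j!$. Any $g$ with bounded displacement per step then lands on an aligned copy infinitely often. If exact alignment fails, the fallback is to make the ray itself locally an infinite concatenation of root-ball copies, so that every sufficiently far vertex of the right residue sees the root ball. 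We would then verify that the Schreier graph remains with exactly $k$ ends, since finite decorations do not add ends, and that it is connected. This gives a Boomerang subgroup with $k$ ends, and with $\aleph_0$ ends in the comb case.
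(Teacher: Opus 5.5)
Your proposal has a genuine gap. The reduction is fine up to the point you flag yourself. The orbit $(\Delta g^{-n})_n$ is either finite, in which case $g^q\in\Delta$ for some $q$ and the conjugates return exactly, or injective, in which case it converges to a single end. Everything then rests on showing that, for every $g$ and every $R$, an escaping orbit lands infinitely often \emph{exactly} on a vertex $w$ with $B_R(w)\cong B_R(\Delta)$. That step is not proved, and the mechanism you propose for it fails for three reasons.

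\emph{Residues.} On a homogeneous stretch where $g^{-1}$ acts as translation by $d$, the orbit stays in one residue class modulo $d$. A periodic pattern of planted copies is therefore hit at a copy's root only if the copies sit in that class. Divisibility of the period by $j!$ does not control this; the orbit's residue is what matters, and nothing fixes it.

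\emph{Drift is not intrinsic to $g$.} Near each planted copy, $g$ acts as it does near the true root. So every copy deflects every orbit that passes through it. Since copies for infinitely many pairs $(g_j,R_j)$ must be interleaved along the same rays, no pattern stays periodic for any fixed $g$.

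\emph{Entry is $g$-specific.} If $B_R(w)\cong B_R(\Delta)$ with $R\ge|g|$, then the predecessor $wg$ of $w$ in the orbit is forced to be the copy of $\Delta g$. The same holds for $wg^2,\dots$ while they stay in the ball. So an orbit can reach $w$ only by arriving along the copy of the root's \emph{backward} $g$-orbit. A planted copy therefore serves one $g$ at one attachment position, and the ``fallback'' of letting every far vertex of a ray see the root ball is not a construction.

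There is also a smaller problem with the skeleton. If each ray carries a single $a$-tail while $b,c$ act by finite cycles, then each infinite $a$-orbit pairs one forward tail with one backward tail, which forces $k$ to be even.

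What is missing is a mechanism that controls all $g$ at once. One option is a genuine diagonal construction:
\begin{itemize}
\item enumerate triples $(g,R,N)$;
\item keep the graph built on a finite region with $k$ open ray-ends;
\item note that a non-periodic $g$-orbit must leave that region through one of these ends;
\item extend there by a closed-off copy of the root's $R$-ball, glued so the orbit enters along the copy of the root's backward $g$-orbit.
\end{itemize}
You would then have to check that the gluing words can always be completed with fresh vertices, that the closed-off copies create no new ends, and that earlier requirements survive (they were met at finite times inside the already-fixed region).

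The paper avoids all of this by getting exact returns uniformly in $g$. It takes an infinite Burnside group $\Gamma$ of exponent $p$, which is one-ended by Stallings since it is torsion. It builds a $k$-sheeted cover of its Cayley graph, with a deck shift, by rerouting the $x_1$-edges at the identities. Since $\varphi(g)^p=e$, the path labelled $g^{-pn}$ from $e^{(0)}$ ends at some $e^{(i_n)}$, and the deck shift identifies that vertex with the root. Hence $g^{pn}\Delta g^{-pn}=\Delta$ for all $g$ and $n$, and no placement of copies is needed.
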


\begin{proof}
    Let \(p\) be sufficiently large and odd, and let \(\Gamma\) be an infinite 3-generated Burnside group of exponent \(p\). Let
\[
\varphi:\mathbb{F}_3=\langle a_1,a_2,a_3\rangle\longrightarrow \Gamma
\]
be an epimorphism, and write \(x_i=\varphi(a_i)\). Let \(\mathcal G=\operatorname{Cay}(\Gamma,\{x_1^{\pm 1},x_2^{\pm 1},x_3^{\pm 1}\})\).

    Note that $\mathcal{G}$ is 1-ended. Indeed, by Stallings' theorem on the ends of groups, a finitely generated group with more than one end must either be virtually $\mathbb{Z}$ (if it has 2 ends) or split over a finite subgroup (if it has infinitely many ends). Both cases strictly require the group to contain elements of infinite order. Since $\Gamma$ is an infinite torsion group, neither cases is possible, and so, $\mathcal G$ has exactly 1 end.

    Let $Z = \mathbb{Z}/k\mathbb{Z}$ if $k < \infty$, and $Z = \mathbb{Z}$ if $k = \aleph_0$. We define a new graph $\mathcal{G'}$ consisting of $k$ disjoint copies of $\mathcal{G}$, indexed by $Z$, with the following modifications: maintain all vertices from the original copies, and keep all edges except for the $x_1$-edges originating at the identity in each copy: for each $i \in Z$, we remove the $x_1$-edge from $e^{(i)}$ to $x_1^{(i)}$ and instead add an $x_1$-edge connecting $e^{(i)}$ to $x_1^{(i+1)}$. This "cut-and-paste" operation preserves the regularity of the graph as a labeled Schreier graph.

\begin{figure}[htpb]
    \centering
    \begin{tikzpicture}[scale=0.9, thick]
        \coordinate (C1) at (0,0);
        \coordinate (C2) at (4,0);
        \coordinate (C3) at (8,0);

        \draw (C1) circle (1.5cm);
        \draw (C2) circle (1.5cm);
        \draw (C3) circle (1.5cm);

        \node at (0, 1.8) {Copy $i-1$};
        \node at (4, 1.8) {Copy $i$};
        \node at (8, 1.8) {Copy $i+1$};

        \filldraw (0, -0.5) circle (1.5pt) node[below] {$e^{(i-1)}$};
        \filldraw (4, -0.5) circle (1.5pt) node[below] {$e^{(i)}$};
        \filldraw (8, -0.5) circle (1.5pt) node[below] {$e^{(i+1)}$};

        \filldraw (0, 0.5) circle (1.5pt) node[above] {$x_1^{(i-1)}$};
        \filldraw (4, 0.5) circle (1.5pt) node[above] {$x_1^{(i)}$};
        \filldraw (8, 0.5) circle (1.5pt) node[above] {$x_1^{(i+1)}$};

        \draw[->, >=stealth, dashed, gray] (0, -0.5) -- (0, 0.4);
        \draw[->, >=stealth, dashed, gray] (4, -0.5) -- (4, 0.4);
        \draw[->, >=stealth, dashed, gray] (8, -0.5) -- (8, 0.4);

        \draw[->, >=stealth, red, thick] (-2, -0.5) -- (-0.1, 0.45); 
        
        \draw[->, >=stealth, red, thick] (0, -0.5) -- (3.9, 0.45);
        \draw[->, >=stealth, red, thick] (4, -0.5) -- (7.9, 0.45);
        
        \draw[->, >=stealth, red, thick] (8, -0.5) -- (10, 0.45);

        \node at (-2.5, 0) {\dots};
        \node at (10.5, 0) {\dots};

    \end{tikzpicture}
    \caption{The rewiring construction of $\mathcal{G}'$. The original $x_1$-edges originating at the identity vertices (dashed gray) are removed. Instead, new $x_1$-edges (solid red) are added to connect $e^{(j)}$ to $x_1^{(j+1)}$ in the adjacent copy, linking the $k$ components.}
    \label{fig:boomerang_rewiring}
\end{figure}

    Let $\Delta \le \mathbb{F}_3$ be the subgroup corresponding to the rooted Schreier graph $(\mathcal{G'}, e^{(0)})$. Because we connected $k$ copies of a 1-ended graph in a structure of a cycle-like for $k<\infty$ or a bi-infinite chain-like for $k=\aleph_0$, in either case, the resulting graph $\mathcal{G'}$ has exactly $k$ ends. It only remains to show that $\Delta$ is a Boomerang subgroup.

    Take any $g \in \mathbb{F}_3$. Since $\Gamma$ is a torsion group of exponent $p$, $\varphi(g)^p = e$, which implies that $g^p \in \ker(\varphi)$. Consequently, $g^{np} \in \ker(\varphi)$ for every integer $n$. 

    When we consider $\mathcal{G'}$ as a quotient of the Cayley graph of $\mathbb{F}_3$,
    we also consider $\mathcal{G^\prime}$ as a cover of the graph $\mathcal{G}$ where $\pi$ is the cover map.
    
    We recall that conjugating the subgroup corresponds to re-rooting the root of the Schreier graph.

    Re-rooting the Schreier graph at the endpoint reached by the word \(g^{-pn}\) corresponds to the subgroup
\[
g^{pn}\Delta g^{-pn}.
\]
    
    Consider the path in \(\mathcal G'\) starting at \(e^0\) and labeled by \(g^{-pn}\). Since \(\pi\) is label-preserving and \(\varphi(g^{-pn})=e\), the endpoint of the path in $\mathcal{G}$ lies over the vertex \(e\in\Gamma\), so the path in \(\mathcal G'\) ends in \(e^{(i_n)}\) for some \(i_n\in Z\).

    Since as rooted labeled Schreier graphs, $(\mathcal{G^\prime}, e^{(0)})$ and $(\mathcal{G^\prime}, e^{(i_n)})$ are isomorphic, \emph{the corresponding subgroups
are equal}.
Therefore
\[
g^{pn}\Delta g^{-pn}=\Delta
\]
for every \(n\ge 1\).

    Therefore, the sequence of conjugate subgroups has a subsequence $(g^{np}\Delta g^{-np})_{n=1}^\infty$ which is equals to $\Delta$ and in particular converges to $\Delta$. This establishes that $\Delta$ is a Boomerang subgroup.
\end{proof}

\printbibliography

\end{document}